\def\l@subsection{\@tocline{2}{0pt}{2.5pc}{2.5pc}{}}
\def\chapter{\clearpage\thispagestyle{plain}\global\@topnum\z@ 
\@afterindenttrue \secdef\@chapter\@schapter}
\newtheorem{thmgl} {Theorem}    
\newtheorem{lemgl} {Lemma}
\newtheorem{corgl} {Corollary}
\theoremstyle{definition}
\newtheorem{rem} {Remark} [section]
\newtheorem{exa} [rem] {Example}
\newtheorem{remgl} {Remark}
\newtheorem{remsgl} [remgl]{Remarks}
\newcommand{\mf}{\mathfrak}
\newcommand{\mc}{\mathcal}
\newcommand{\mb}{\mathbb}
\newcommand{\nts}{\negthinspace}     
\newcommand{\Nts}{\nts\nts}
\newcommand{\ov}{\overline}
\newcommand{\un}{\underline}
\newcommand{\sm}{\setminus}         
\newcommand{\ot}{\otimes}           
\newcommand{\la}{\langle}
\newcommand{\ra}{\rangle}
\newcommand{\Hom}{{\rm Hom}}        
\newcommand{\Mat}{{\rm Mat}}
\newcommand{\Sym}{{\rm Sym}} 
\newcommand{\sgn}{{\rm sgn}}
\newcommand{\id}{{\rm id}}
\let\ttie\t
\newcommand{\tie}[1]{{\let\t\ttie \ttie#1}}
\renewcommand{\t}{\mf{t}}  
\newcommand{\GL}{{\rm GL}}
\newcommand{\SL}{{\rm SL}}
\newcommand{\e}{\epsilon}
\def\vcdots{\vbox{\baselineskip4\p@ \lineskiplimit\z@
\kern3\p@\hbox{.}\hbox{.}\hbox{.}\Nts\nts\kern3\p@}}
\begin{document}

\title{Bases for spaces of highest weight vectors in arbitrary characteristic}

\begin{abstract}
Let $k$ be an algebraically closed field of arbitrary characteristic.
First we give explicit bases for the highest weight vectors for the action of $\GL_r\times\GL_s$ on
the coordinate ring $k[\Mat_{rs}^m]$ of $m$-tuples of $r\times s$-matrices.
It turns out that this is done most conveniently by giving an explicit good $\GL_r\times\GL_s$-filtration on $k[\Mat_{rs}^m]$.
Then we deduce from this result explicit spanning sets of the $k[\Mat_n]^{\GL_n}$-modules of highest weight vectors in the coordinate ring $k[\Mat_n]$ under the conjugation action of $\GL_n$. 
\end{abstract}

\author[A.\ Dent]{Adam Dent}
\author[R.\ Tange]{Rudolf Tange}
\keywords{}
\thanks{2010 {\it Mathematics Subject Classification}. 13A50, 16W22, 20G05.}

\maketitle
\markright{\MakeUppercase{Bases for highest weight vectors}}

\section*{Introduction}\label{s.intro}
Let $k$ be  an algebraically closed field, let $\GL_n$ be the group of invertible $n\times n$ matrices with entries in $k$ and let $T_n$ and $U_n$ be the subgroups of diagonal matrices and of upper uni-triangular matrices respectively. The group $\GL_r\times\GL_s$ acts on the $k$-vector space $\Mat_{rs}^m$ of $m$-tuples of $r\times s$ matrices with entries in $k$ via $((A,B)\cdot\un X)_i=AX_iB'$, where $\un X=(X_1,\ldots,X_m)\in\Mat_{rs}^m$ and $B'$ is the transpose of $B$, and on the coordinate ring $k[\Mat_{rs}^m]$ via $((A,B)\cdot f)(\un X)=f((A',B')\cdot\un X)=f((A'X_iB)_{1\le i\le m})$. For $(\mu,\lambda)$ a character of $T_r\times T_s$, the space of highest weight vectors will be denoted $k[\Mat_{rs}^m]_{(\mu,\lambda)}^{U_r\times U_s}$. It consists of the functions $f\in k[\Mat_{rs}^m]$ with $(A,B)\cdot f=f$ for all $(A,B)\in U_r\times U_s$ and $(A,B)\cdot f=\mu(A)\lambda(B)f$ for all $(A,B)\in T_r\times T_s$.

Our first goal in this paper is to give bases of the vector spaces $k[\Mat_{rs}^m]_{(\mu,\lambda)}^{U_r\times U_s}$. In \cite{T3} this was done under the assumption that $k$ is of characteristic $0$.
The method there was to reduce the problem via a few simple isomorphisms to certain results from the representation theory of the symmetric group which were originally due to J.~Donin. Although this method is rather straightforward, it is hard to generalise to arbitrary characteristic. In the present paper we solve the problem in arbitrary characteristic using results on bideterminants from the work of Kouwenhoven \cite{Kou} which is based on work of Clausen \cite{Clau}, \cite{Clau2}.
We introduce ``twisted bideterminants" to construct an explicit ``good" filtration and, in particular, give bases for the spaces of highest weight vectors in $k[\Mat_{rs}^m]$, see Theorem~\ref{thm.good_filtration} and its two corollaries in Section~\ref{s.several_matrices}. It turns out that these bases can also be obtained by dividing the basis elements from \cite[Thm.~4]{T3} by certain integers in the obvious $\mb Z$-form and then reducing mod $p$.

As an application we give in Section~\ref{s.conjugation_action} explicit finite homogeneous spanning sets of the $k[\Mat_n]^{\GL_n}$-modules of highest weight vectors in the coordinate ring $k[\Mat_n]$ under the conjugation action of $\GL_n$, see Theorem~\ref{thm.pullback_span_2} in in Section~\ref{s.conjugation_action}. Although this problem is difficult to tackle directly, we gave in \cite{T3} a method in arbitrary characteristic called ``transmutation" to reduce this problem to giving spanning sets for the vector spaces $k[\Mat_{rs}^m]_{(\mu,\lambda)}^{U_r\times U_s}$, see Theorem~\ref{thm.surjective_pullback} in the present paper. So the problem is reduced to the problem we solved in Section~\ref{s.several_matrices}.

\section{Preliminaries}\label{s.prelim}
The field $k$, the groups $\GL_n,U_n,T_n$, the variety $\Mat_{rs}^m$ and its coordinate ring $k[\Mat_{rs}^m]$ are as in the introduction.
Note that $k[\Mat_{rs}^m]$ is the polynomial algebra over $k$ in the variables $x(l)_{ij}$, $1\le l\le m$, $1\le i\le r$, $1\le j\le s$, where $x(l)_{ij}$ is the entry in the $i$-th row and $j$-column of the $l$-th matrix. If $m=1$ we write $x_{ij}$ instead of $x(1)_{ij}$. The $\GL_r\times\GL_s$-module $k[\Mat_{rs}^m]$ is multigraded by tuples of integers $\ge0$ (not necessarily partitions) of length $m$.
We denote the set of such tuples with coordinate sum $t$ by $\Sigma_{m,t}$ or just $\Sigma_t$. In this section we will only consider the $\GL_r\times\GL_s$-module $k[\Mat_{rs}]$, although we will
use the set $\Sigma_{m,t}$.


\subsection{Skew Young diagrams and tableaux}\label{s.skew_diagrams_and_tableaux}
In this section we introduce some combinatorics that we will need in Section~\ref{s.several_matrices} and which originates from \cite{JP}, \cite{Z1,Z2}, and \cite{Donin1,Donin2}.
In Section~\ref{s.bidets_skew_Schur_Specht} we discuss interpretations in terms of representation theory.

For $\lambda$ a partition of $n$ we denote the length of $\lambda$ by $l(\lambda)$ and its coordinate sum by $|\lambda|$.
We will identify each partition $\lambda$ with the corresponding Young diagram $\{(i,j)\,|\,1\le i\le l(\lambda),1\le j\le\lambda_i\}$. The $(i,j)\in\lambda$ are called the {\it boxes} or {\it cells} of $\lambda$.
More generally, if $\lambda,\mu$ are partitions with $\lambda\supseteq\mu$, then we denote the diagram $\lambda$ with the boxes of $\mu$ removed by $\lambda/\mu$ and call it the {\it skew Young diagram} associated to the pair $(\lambda,\mu)$. Of course the skew diagram $\lambda/\mu$ does not determine $\lambda$ and $\mu$. For a skew diagram $E$, we will denote the transpose by $E'$ and the number of boxes by $|E|$. The group of permutations of the boxes of $E$ will be denoted by $\Sym(E)$, and the column stabliser of $E$ in $\Sym(E)$, that is, the product of the groups of permutations of each column of $E$, will be denoted by $C_E$. By {\it diagram mapping} we mean a bijection between two diagrams as subsets of $\mb N\times\mb N$.

Let $E$ be a skew diagram with $t$ boxes. A {\it skew tableau} of shape $E$ is a mapping $T:E\to \mb N=\{1,2,\ldots\}$.
A skew tableau of shape $E$ is called {\it ordered} if its entries are weakly increasing along rows and weakly increasing along columns, and it is called {\it semi-standard} if its entries are weakly increasing along rows and strictly increasing along columns. It is called a {\it $t$-tableau} if its entries are the numbers $1,\ldots,t$ (so the entries must be distinct). A $t$-tableau whose entries are strictly increasing along both columns and rows is called {\it standard}.
If $m$ is the biggest integer occurring in a tableau $T$, then the {\it weight} of $T$ is the $m$-tuple whose $i$-th component is the number of occurrences of $i$ in $T$. Sometimes we will also consider
the weight of $T$ as an $m'$-tuple for some $m'\ge m$ by extending it with zeros.

For a skew shape $E$ with $t$ boxes, we define the {\it canonical} skew tableau $S_E$ by filling the boxes in the $i$-th row with $i$'s, and we define the tableau $T_E$ by filling in the numbers $1,\ldots,t$ row by row from left to right and top to bottom. So $S_E$ is semi-standard, and $T_E$ is a $t$-tableau which is standard. The {\it standard enumeration} of a tableau $T$ of shape $E$ is the $t$-tuple obtained from $T$ by reading its entries row by row from left to right and top to bottom.

Let $\mu$ be the tuple of row lengths of $E$, i.e. the weight of $S_E$. Let $S$ be a tableau of shape $F$ and weight $\mu$. If if $S=S_E\circ\alpha$ for some diagram mapping $\alpha:F\to E$,
then we say that $\alpha$ {\it represents} $S$. We call $S$ {\it ($E$-)special} if it is semi-standard and has a representative $\alpha:F\to E$
such that for any $a,b\in F$, if $\alpha(b)$ occurs strictly below $\alpha(a)$ in the same column, then $b$ occurs in a strictly lower row than $a$. We call a diagram mapping $\alpha:F\to E$ {\it admissible} if for $\alpha(b)$ strictly below $\alpha(a)$ in the same column, $b$ occurs in a strictly lower row than $a$ {\it and} in a column to the left of $a$ or in the same column.

Define two orderings $\le$ and $\preceq$ on $\mb N\times\mb N$ as follows: $(p,q)\le(r,s)$ if and only if $p\le r$ and $q\le s$, and $(p,q)\preceq(r,s)$ if and only if $p<r$ or ($p=r$ and $q\ge s$).
Note that $\preceq$ is a linear ordering. Recall that skew Young diagrams are by definition subsets of $\mb N\times\mb N$.
A diagram mapping $\alpha:F\to E$ is called {\it special} if $\alpha:(F,\le)\to(E,\preceq)$ and  $\alpha^{-1}:(E,\le)\to(F,\preceq)$ are order preserving.
So $\alpha$ is special if and only if $\alpha^{-1}$ is special.
If $\alpha$ is special, then $S_E\circ\alpha$ is special semi-standard and $\alpha$ is admissible, see \cite[p155-159]{Z1}.
Furthermore, by \cite[Thm.~3]{T3} every special semi-standard tableau has a unique special representative.

\begin{exa} 
Let $F=(2,2)$ and $E=(3,2)/(1)$ be skew diagrams. Since each has four boxes, we can construct a diagram mapping between the two shapes.
Give $F$ the standard enumeration. We now define a diagram mapping $\alpha_1:F\to E$ by numbering the boxes of $E$:
$a\in F$ is mapped by $\alpha_1$ to the box of $E$ which has the same number.
$$\begin{ytableau}1&2\\3&4\end{ytableau}
\hspace{7mm}\overset{\alpha_1}{\longrightarrow}\hspace{7mm}
\begin{ytableau}\none&1&2\\3&4\end{ytableau}$$
\smallskip
Then $\alpha_1$ is not admissible, since $4$ is below $1$ in the same column of $E$, but it occurs in a column strictly to the right of $1$ in $F$.
We now form the canonical tableau $S_E$ on $E$ and pull this numbering back to $F$ via $\alpha_1$ to obtain the tableau $S=S_E\circ\alpha_1$:
$$S=\begin{ytableau}1&1\\2&2\end{ytableau}
\hspace{7mm}\overset{\alpha_1}{\longrightarrow}\hspace{7mm}
\begin{ytableau}\none&1&1\\2&2\end{ytableau}\ =S_E$$
\smallskip
Clearly $S$ is semi-standard and for all $a,b\in F$, $b$ occurs in a strictly lower row than $a$ whenever $\alpha_1(b)$ occurs strictly below $\alpha_1(a)$ in the same column.
So $S$ is $E$-special semi-standard.
Now define $\alpha_2,\alpha_3:F\to E$ by:
$$\begin{ytableau}1&2\\3&4\end{ytableau}
\hspace{3mm}\overset{\alpha_2}{\longrightarrow}\hspace{3mm}
\begin{ytableau}\none&1&2\\4&3\end{ytableau}
\text{\hspace{5mm}and\hspace{5mm}}
\begin{ytableau}1&2\\3&4\end{ytableau}
\hspace{3mm}\overset{\alpha_3}{\longrightarrow}\hspace{3mm}
\begin{ytableau}\none&2&1\\4&3\end{ytableau}$$
\smallskip
Then $S_E\circ\alpha_2=S_E\circ\alpha_3=S$, $\alpha_2$ is admissible, but not special, and $\alpha_3$ is special.
The inverse of $\alpha_3$ is also special and is therefore the unique special representative of the $F$-special semi-standard tableau $T=S_F\circ\alpha_3^{-1}$ on $E$:
$$T=S_F\circ\alpha_3^{-1}=\hspace{1mm}\begin{ytableau}\none&1&1\\2&2\end{ytableau}
\hspace{7mm}\overset{\alpha_3^{-1}}{\longrightarrow}\hspace{5mm}
\begin{ytableau}1&1\\2&2\end{ytableau}\ =S_F$$
\smallskip
Besides $T$ there is one other semi-standard tableau $\tilde T$ of shape $E$ and weight $(2,2)$:
$$\tilde T=\hspace{5mm}\begin{ytableau}\none&1&2\\1&2\end{ytableau}\hspace{3mm}\overset{\beta}{\longrightarrow}\hspace{3mm}
\begin{ytableau}1&1\\2&2\end{ytableau}$$
This tableau is not $F$-special: if $\beta:E\to F$ is a diagram mapping with $\tilde T=S_F\circ\beta$ and $b$ is the rightmost box in the top row of $E$, 
then there must be a box $a$ of $E$ such that $\beta(a)$ is directly above $\beta(b)$ in the same column in $F$, but $a$ cannot occur in a higher row than $b$ in $E$.
\end{exa}

From now on we will always expect representatives of special semi-standard tableaux to be admissible. 

What we will be using in the proof of our main result Theorem~\ref{thm.good_filtration} is a refinement of the above combinatorics.
We need to cut $F$ and $E$ into pieces labelled by certain integers and then we work with certain diagram mappings $\alpha$ which map each piece of $F$ into the piece of $E$ of the same label.
We then apply the above combinatorics to the restrictions of $\alpha$ to these pieces.
Now let $E$ and $F$ be arbitrary skew diagrams each with $t$ boxes. Let $P$ and $Q$ be ordered tableaux of shapes $E$ and $F$, both of weight $\nu\in\Sigma_t$. Then a diagram mapping $\alpha:F\to E$
with $P\circ\alpha=Q$ determines an $m$-tuple of tableaux $(S_{P^{-1}(1)}\circ\alpha_1,\ldots,S_{P^{-1}(m)}\circ\alpha_m)$ (*),
where $\alpha_i:Q^{-1}(i)\to P^{-1}(i)$ is the restriction of $\alpha$ to $Q^{-1}(i)$.
We will say that $\alpha$ {\it represents} (*). Notice that the $m$-tuples (*), for varying $\alpha$, all have the same tuple of shapes and the same tuple of weights.
We express this by saying that the tuple of tableaux has {\it shapes determined by $Q$ and weights determined by $P$}.
When the tableaux $S_{P^{-1}(i)}\circ\alpha_i$ are special semi-standard, we require the $\alpha_i$ to be admissible.
For more detail see \cite[Sect.~3]{T3}, or \cite{Z1}, \cite{Z2} where special diagram mappings are defined as ``pictures".
\begin{exa}
Take $F=(4,4,3)/(1)$ and $E=(4,3,3)$ be skew diagrams, take $\nu=(4,6)$ and define $Q$ and $P$ as indicated below.
$$Q=\hspace{3mm}\ytableausetup{centertableaux}\begin{ytableau}\none&1&1&2\\1&1&2&2\\2&2&2\end{ytableau}
\hspace{7mm}\overset{\alpha}{\longrightarrow}\hspace{7mm}
\begin{ytableau}1&1&2&2\\1&1&2\\2&2&2\end{ytableau}\hspace{3mm}=P$$
Then $\alpha_1$ goes between the ``$1$-pieces" of $Q$ and $P$ and $\alpha_2$ goes between the ``$2$-pieces" of $Q$ and $P$.
We also indicate the canonical numberings on the pieces of $E$ and certain special semi-standard numberings on the pieces of $F$
which can be obtained by pulling back the canonical numberings along suitable $\alpha_i$.
$$\begin{ytableau}\none&1&1\\2&2\end{ytableau}
\hspace{3mm}\overset{\alpha_1}{\longrightarrow}\hspace{3mm}
\begin{ytableau}1&1\\2&2\end{ytableau}
\text{\hspace{5mm}and\ }
\begin{ytableau}\none&\none&\none&1\\\none&\none&1&2\\3&3&3\end{ytableau}
\hspace{3mm}\overset{\alpha_2}{\longrightarrow}
\begin{ytableau}\none&\none&1&1\\\none&\none&2\\3&3&3\end{ytableau}$$
The tableau $\hspace{3mm}\begin{ytableau}\none&\none&\none&1\\\none&\none&2&3\\1&3&3\end{ytableau}\vspace{3mm}\hspace{3mm}$
is the only other $P^{-1}(2)$-special semi-standard tableau of shape $Q^{-1}(2)$ and weight $(2,1,3)$.
\end{exa}

\subsection{Bideterminants and skew Schur and Specht modules}\label{s.bidets_skew_Schur_Specht}
In this section we will review some facts from the representation theory of the general linear group as well as the symmetric group.
The representation theory of the symmetric group will not be used in this paper, but it may help to understand the combinatorics we use.
It was also used in \cite[Thm.~4]{T3} to obtain a version in characteristic $0$ of Corollary~2 to Theorem~\ref{thm.good_filtration} from the present paper.

Let $E$ be a skew diagram with $t$ boxes.
Let $S$ and $T$ be tableaux of shape $E$, $S$ with entries $\le r$ and $T$ with entries $\le s$. Then we define the {\it bideterminant} $(S\,|\,T)\in k[\Mat_{rs}]$ by
$$(S\,|\,T)=\prod_{i=1}^n{\rm det}\big((x_{S(a),T(b)})_{a,b\in E^i}\big),$$
where $E^i$ is the $i$-th column of $E$ and $n$ is the number of columns in $E$. Note that we have
\begin{equation*}
(S\,|\,T)=\sum_{\pi\in C_E}{\rm sgn}(\pi)\prod_{a\in E}x_{S(\pi(a)),\,T(a)}=\sum_{\pi\in C_E}{\rm sgn}(\pi)\prod_{a\in E}x_{S(a),\,T(\pi(a))}\,,
\end{equation*}
where $C_E\le\Sym(E)$ is the column stabiliser of $E$.

As is well-known, the elements $(S\,|\,T)$, $S$ standard with entries $\le r$ and $T$ standard with entries $\le s$ form a basis of $k[\Mat_{rs}]$, see \cite{DKR}. In fact one can use bideterminants to construct
explicit ``good" filtrations of $k[\Mat_{rs}]$ as a $\GL_r\times\GL_s$-module, see \cite{DeCEP}.

\begin{exa}
Let $E=(3,2)/(1)$, $S=\begin{ytableau}\none&1&2\\1&2\end{ytableau}$\,, $T=\begin{ytableau}\none&2&3\\1&3\end{ytableau}$\,. Then $(S\,|\,T)\in k[\Mat_{2,3}]$ is given by
$$(S\,|\,T)\left(\begin{matrix}a&b&c\\d&e&f\end{matrix}\right)=a.\det\left(\begin{matrix}b&c\\e&f\end{matrix}\right).f=abf^2-acef\,.$$
\end{exa}

The {\it skew Schur module} associated to a shape $E$, denoted by $\nabla_{\GL_r}(E)$, is the span in $k[\Mat_{rs}]$, $s\ge$ the number of rows of $E$, of all the bideterminants $(S\,|\,S_E)$ where $S$ is a tableau of shape $E$ and with entries $\le r$. The skew Schur module $\nabla_{\GL_r}(E)$ will be nonzero if and only if $r$ is $\ge$ the length of each column of $E$. It can easily be seen that $\nabla_{\GL_r}(E)$ is $\GL_r$-stable, and it is well-known that the set of bideterminants $(S\,|\,S_E)$ with $S$ as above and in addition semi-standard form a basis. Note that if $E$ is an ordinary Young tableau then $\nabla_{\GL_r}(E)$ is the Schur (or induced) module associated to it.
The {\it co-Schur} or {\it Weyl module} $\Delta_{\GL_r}(E)$ associated to a shape $E$ can be defined as the contravariant dual $\nabla_{\GL_r}(E)^\circ$ which is the dual of the vector space $\nabla_{\GL_r}(E)$ with $\GL_r$ acting via the transpose: $(g\cdot f)(v)=f(g'\cdot v)$, where $g'$ is the transpose of $g\in\GL_r$. If $E$, $F$, $\mu$ are as in Section~\ref{s.skew_diagrams_and_tableaux}, then the number of special semi-standard tableaux of shape $F$ and weight $\mu$ is equal to the dimension of $\Hom_{\GL_r}(\Delta_{\GL_r}(F),\nabla_{\GL_r}(E))$ whenever $r$ is $\ge$ the number of rows of $E$ or $\ge$ the number of rows of $F$. This can be seen by reducing to the case that $k$ has characteristic $0$ using \cite[Prop.~II.4.13]{Jan}, and then using standard properties of skew Schur functions. 
For more details we refer to \cite{ABW} and \cite{Kou}.

Drop for the moment the assumption that $k$ is algebraically closed. The {\it skew Specht module} $S(E)=S_t(E)=S_{t,k}(E)$ for the group algebra $A=A_{t,k}=k\Sym_t$ of the symmetric group $\Sym_t$ on $\{1,\ldots,t\}$ is defined just as in the case of an ordinary Young diagram: $S(E)=Ae_1e_2$, where $e_1$ is the column anti-symmetriser of $T_E$ and $e_2$ is the row symmetriser of $T_E$. The module $M(E)=M_{t,k}(E)=Ae_2$ is called the {\it permutation module} associated to $E$. One can also define $S_{t,k}(E)$ as the weight space $\nabla_{\GL_r}(E)_{1^t}$ for any $r\ge t$. Note that this weight space is indeed stable under $\Sym_t\le\GL_r$.
Now assume $k$ is of characteristic $0$ and let $E$, $F$, $\mu$ be as in Section~\ref{s.skew_diagrams_and_tableaux}. Then the number of special semi-standard tableaux of shape $F$ and weight $\mu$ is equal to the dimension of $\Hom_{\Sym_t}(S(E),S(F))\cong\big(S(E)\ot S(F)\big)^{\Sym_t}\cong\big(S(E)\ot S(F)\big)_{\Sym_t}$, where $N_{\Sym_t}$ denotes the space of coinvariants of an $A$-module $N$, i.e. the quotient of $N$ by the span of the elements $x-g\cdot x$, $x\in N$, $g\in\Sym_t$.

Assume $r=r_1+\cdots+r_m$ for certain integers $r_i>0$. For each  $\nu\in\Sigma_{m,t}$ there exists a $\big(\prod_{i=1}^m\GL_{r_i}\big)$-module filtration of the piece of multidegree $\nu$ of $\nabla_{\GL_r}(E)$ with sections in some order isomorphic to the modules $\bigotimes_{i=1}^m\nabla_{\GL_{r_i}}\big(P^{-1}(i)\big)$, $P$ an ordered tableau of shape $E$ and weight $\nu$. Here we can omit the $P$'s for which $P^{-1}(i)$ has a column of length $>r_i$ for some $i$.
See \cite[Thm.~II.4.11]{ABW} or \cite[Thm.~1.4]{Kou} and Remark~2 after it.
Now assume that $k$ has characteristic $0$, let $\nu\in\Sigma_{m,t}$ and let $\Sym_\nu\le\Sym_t$ be the Young subgroup associated to $\nu$. Then we have an isomorphism $S_t(E)\cong\bigoplus_P\bigotimes_{i=1}^mS_{\nu_i}(P^{-1}(i))$ of $\Sym_\nu$-modules, where the sum is over all ordered tableau $P$ of shape $E$ and weight $\nu$.

\begin{remsgl}\label{rems.bidet}
1.\ One can of course also define $\nabla_{\GL_r}(E)$ as the span in $k[\Mat_{sr}]$ of all the bideterminants $(S_E\,|\,T)$ where $T$ is a tableau of shape $E$ and with entries $\le r$.
Then the action of $\GL_r$ comes from the right multiplication rather than from the left multiplication.\\
2.\ Let $\lambda$ and $\mu$ be partitions with $\mu\subseteq\lambda$. Let $r,r_1,s$ be integers $\ge0$ with $r_1,s\ge l(\lambda)$ and $r_1\ge l(\mu)+r$ and put $r'=r_1-r$.
We embed $\GL_{r'}\times\GL_r$ in $\GL_{r_1}$ such that $\GL_r$ fixes the first $r'$ basis vectors. 
Then one can embed $\nabla_{\GL_r}(\lambda/\mu)$ as a $\GL_r$-submodule in $\nabla_{\GL_{r_1}}(\lambda)$. Indeed one can deduce from \cite{Don} that $\nabla_{\GL_r}(\lambda/\mu)\cong\Hom_{\GL_{r'}}(\Delta_{\GL_{r'}}(\mu),\nabla_{\GL_{r_1}}(\lambda))\cong\nabla_{\GL_{r_1}}(\lambda)^{U_{r'}}_\mu$, where $\mu$ is considered as a weight for $T_{r'}$.
One can also construct an explicit isomorphism as follows. Let $E\in\Mat_{r's}$ be the matrix whose first $\min(r',s)$ rows are those of the $s\times s$ identity matrix followed by $r'-s$ zero rows if $r'>s$.
Then the comorphism of the morphism $A\mapsto\big[\begin{smallmatrix}E\\A\end{smallmatrix}\big]:\Mat_{rs}\to\Mat_{r_1s}$ maps $\nabla_{\GL_{r_1}}(\lambda)^{U_{r'}}$ isomorphically onto $\nabla_{\GL_r}(\lambda/\mu)$. 
Combinatorially this is easy to understand: $\nabla_{\GL_{r_1}}(\lambda)^{U_{r'}}$ has a basis labelled by semi-standard tableaux of shape $\lambda$
with entries $\le r_1$ in which the entries $\le r'$ occupy the boxes of $\mu$ and form the canonical tableau $S_\mu$. These tableaux are clearly in one-one correspondence with the semi-standard tableaux of shape $\lambda/\mu$ with entries $\le r$: just remove the $\mu$-part and subtract $r'$ from the entries of the resulting tableau of shape $\lambda/\mu$.\\
3.\ The present paper uses bideterminants and the representation theory of the general linear group.
We compare it with the approach in \cite[Sect.~3]{T3} which uses the representation theory of the symmetric group.

Let $\lambda$ and $\mu$ be partitions of $t$ with $l(\mu)\le r$ and $l(\lambda)\le s$ and
let $\nu\in\Sigma_t$.
Then we have that the space of coinvariants $\big(M(\mu)\ot M(\lambda)\big)_{\Sym_\nu}$ is isomorphic to the piece of multidegree $\nu$
of the weight space $k[\Mat_{rs}^m]_{(\mu,\lambda)}$, see \cite[proof of Thm.~3]{T3} and also \cite[Thm 3.7]{Clau}.
The permutation module $M(\mu)$ can be identified with the weight space $k[\Mat_{t\,r}]_{(1^t,\mu)}$ and similar for $M(\lambda)$.
If $k$ has characteristic $0$, then $\big(S(\mu)\ot S(\lambda)\big)_{\Sym_\nu}$ embeds in $\big(M(\mu)\ot M(\lambda)\big)_{\Sym_\nu}$.
In \cite[Sect.~3]{T3} we worked with $S(\mu)$ and $S(\lambda)$ which can be thought of as spanned by bideterminants
$(T\,|\,S_\mu)$, $T$ a $t$-tableau of shape $\mu$, and $(T\,|\,S_\lambda)$, $T$ a $t$-tableau of shape $\lambda$.
Actually, we mostly worked with skew versions of $S(\mu)$ and $S(\lambda)$. Only after \cite[Prop.~3]{T3} we passed to coinvariants.
In the present paper we work entirely inside the space of coinvariants which is the degree $\nu$ piece of $k[\Mat_{rs}^m]_{(\mu,\lambda)}$.
This means that $t$-tableaux play almost no role, they are ``replaced" by diagram mappings $\alpha:\mu\to\lambda$.
The canonical tableaux $S_\mu$ and $S_\lambda$ are now arbitrary tableaux $S$ and $T$ of shape $\mu$ and $\lambda$ and we work with twisted bideterminants $(S\,|_\alpha\,T)$.
\end{remsgl}

\section{The action of $\GL_r\times\GL_s$ on several $r\times s$-matrices}\label{s.several_matrices}
Let $\lambda,\mu$ be partitions of $t$ with $l(\mu)\le r$ and $l(\lambda)\le s$, let $P,Q$ ordered tableaux of shapes $\lambda$ and $\mu$, both of weight $\nu\in\Sigma_t$ and
$\alpha:\mu\to\lambda$ a diagram mapping such that $P\circ\alpha=Q$.
Define $u_{P,Q,\alpha}\in k[\Mat_{rs}^m]^{U_r\times U_s}_{(\mu,\lambda)}$ by
\begin{equation}\label{eq.u}
u_{P,Q,\alpha}=\sum_{\pi\in C_\mu, \sigma\in C_\lambda}{\rm sgn}(\pi){\rm sgn}(\sigma)\prod_{a\in\mu}x(Q(a))_{\pi(a)_1,\,\sigma(\alpha(a))_1}\,,
\end{equation}
where $b_1$ is the row index of a box $b$. It was proved in \cite[Thm.~4]{T3} that for suitable $(\nu,P,Q,\alpha)$ these elements form a basis of
the vector space $k[\Mat_{rs}^m]^{U_r\times U_s}_{(\mu,\lambda)}$ when $k$ has characteristic $0$.\footnote{In \cite{T3} we used the inverse rather than the transpose for the action of $\GL_r$,
which explains why we have $\pi(a)_1$ here rather than $r-\pi(a)_1+1$.} 
More generally, we can consider for $E$ and $F$ skew shapes with $t$ boxes, $P,Q$ tableaux of shapes $E$ and $F$, both of weight $\nu\in\Sigma_t$,
$\alpha:F\to E$ a diagram mapping such that $P\circ\alpha=Q$, $S$ a tableau of shape $F$ with entries $\le r$ and $T$ a tableau of shape $E$ with entries $\le s$ the sum
\begin{equation}\label{eq.twisted_bidet0}
\sum_{(\pi, \sigma)\in C_F\times C_E}{\rm sgn}(\pi){\rm sgn}(\sigma)\prod_{a\in F}x(Q(a))_{S(\pi(a)),\,T(\sigma(\alpha(a)))}\,.
\end{equation}
Note that we obtain \eqref{eq.u} from \eqref{eq.twisted_bidet0} by taking $S$ and $T$ the canonical tableaux $S_F$ and $S_E$.

We will now show that \eqref{eq.twisted_bidet0} is in $\mb Z[\Mat_{rs}^m]=\mb Z[(x(l)_{ij})_{lij}]$ divisible by the order of the subgroup
$$C_{P,Q,\alpha}=\{(\tau,\rho)\in C_F(Q)\times C_E(P)\,|\,\alpha\circ\tau\circ\alpha^{-1}=\rho\}$$
of $C_F\times C_E$, where $C_F(Q)$ is the stabiliser of $Q$ in $C_F$ and $C_E(P)$ is defined similarly.
Note that
$$C_{P,Q,\alpha}\cong C_F(Q)\cap\alpha^{-1}C_E(P)\alpha\le\Sym(F)\cong\prod_{i=1}^mC_{Q^{-1}(i)}\cap\alpha_i^{-1}C_{P^{-1}(i)}\alpha_i$$ and that
$$C_{P,Q,\alpha}\cong \alpha C_F(Q)\alpha^{-1}\cap C_E(P)\le\Sym(E)\cong\prod_{i=1}^m\alpha_iC_{Q^{-1}(i)}\alpha_i^{-1}\cap C_{P^{-1}(i)}\,.$$
In each of the two lines above one may omit ``$(Q)$" in $C_F(Q)$ or ``$(P)$" in $C_E(P)$, but not both.
\begin{lemgl}
Each summand in \eqref{eq.twisted_bidet0} only depends on the left coset of $(\pi,\sigma)$ modulo $C_{P,Q,\alpha}$.
\end{lemgl}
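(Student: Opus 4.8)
The plan is to show that for every $(\tau,\rho)\in C_{P,Q,\alpha}$ the summand in \eqref{eq.twisted_bidet0} indexed by $(\pi,\sigma)$ coincides with the summand indexed by $(\pi\tau,\sigma\rho)$; as $(\tau,\rho)$ runs over $C_{P,Q,\alpha}$ this is exactly the assertion that the summand depends only on the left coset $(\pi,\sigma)C_{P,Q,\alpha}$. Note first that $(\pi\tau,\sigma\rho)$ is again a legitimate index of the sum, since $\tau\in C_F(Q)\le C_F$ and $\rho\in C_E(P)\le C_E$. I would then treat the sign factor and the monomial factor separately.

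For the signs, $\sgn(\pi\tau)\sgn(\sigma\rho)=\sgn(\pi)\sgn(\sigma)\cdot\sgn(\tau)\sgn(\rho)$, and since $\alpha\colon F\to E$ is a bijection of box sets and $\rho=\alpha\circ\tau\circ\alpha^{-1}$ as permutations, $\rho$ and $\tau$ are conjugate, so $\sgn(\rho)=\sgn(\tau)$ and the extra factor $\sgn(\tau)\sgn(\rho)=\sgn(\tau)^2$ equals $1$.

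For the monomial, in the product over $a\in F$ attached to $(\pi\tau,\sigma\rho)$ I would substitute $b=\tau(a)$; since $\tau$ is a bijection of $F$, $b$ again ranges over $F$. The degree label becomes $x(Q(\tau^{-1}(b)))$, which equals $x(Q(b))$ because $\tau\in C_F(Q)$ gives $Q\circ\tau=Q$. The first matrix index becomes $S(\pi\tau(\tau^{-1}(b)))=S(\pi(b))$. For the second index, rewriting the defining relation $\rho=\alpha\circ\tau\circ\alpha^{-1}$ as $\rho\circ\alpha=\alpha\circ\tau$ gives $\sigma\rho\alpha(\tau^{-1}(b))=\sigma\alpha\tau(\tau^{-1}(b))=\sigma(\alpha(b))$. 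Hence the product turns into $\prod_{b\in F}x(Q(b))_{S(\pi(b)),\,T(\sigma(\alpha(b)))}$, which is the monomial indexed by $(\pi,\sigma)$, completing the argument.

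There is no real obstacle here; the content is bookkeeping. The only points that need care are choosing the correct direction for the reindexing substitution and applying the conjugation relation $\rho\circ\alpha=\alpha\circ\tau$ to untangle $\sigma\rho\alpha\tau^{-1}$; after that everything follows from $Q\circ\tau=Q$ together with the fact that both products run over all of $F$. (Incidentally, the condition $P\circ\rho=P$ in the definition of $C_{P,Q,\alpha}$ is automatic once $Q\circ\tau=Q$ and $P\circ\alpha=Q$ hold, which is why exactly one of the two stabiliser conditions may be dropped.)
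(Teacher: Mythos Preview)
Your proof is correct and follows essentially the same route as the paper's: both arguments note that $\sgn(\tau)=\sgn(\rho)$ because $\rho=\alpha\tau\alpha^{-1}$, then reindex the product via $b=\tau(a)$ and use $\rho\circ\alpha=\alpha\circ\tau$ together with $Q\circ\tau=Q$ to recover the original monomial. Your closing parenthetical about the redundancy of one stabiliser condition matches the paper's remark preceding the lemma.
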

\begin{proof}
Let $(\pi_1,\sigma_1),(\pi_2,\sigma_2)\in C_F\times C_E$ and suppose $(\pi_2,\sigma_2)=(\pi_1\circ\tau,\sigma_1\circ\rho)$ for some $(\tau,\rho)\in C_{P,Q,\alpha}$. Then $\sgn(\pi_1)\sgn(\sigma_1)=\sgn(\pi_2)\sgn(\sigma_2)$, since $\sgn(\tau)=\sgn(\rho)$. Furthermore,
\begin{align*}
\underset{a\in F}{\prod}x(Q( a))_{S(\pi_2(a)),\,T(\sigma_2(\alpha( a)))}&=\underset{a\in F}{\prod}x( Q( a))_{S(\pi_1(\tau( a))),\,T(\sigma_1(\rho(\alpha( a))))}\\
&=\underset{a\in F}{\prod}x( Q( a))_{S((\tau( a))),\,T(\sigma_1(\alpha(\tau( a))))}\\
&=\underset{a\in F}{\prod}x(Q(\tau^{-1}( a)))_{S(\pi_1( a)),\,T(\sigma_1(\alpha( a)))}\\
&=\underset{a\in F}{\prod}x(Q(a))_{S(\pi_1( a)),\,T(\sigma_1(\alpha(a)))}.
\end{align*}
\end{proof}

We now define the {\it twisted bideterminant} $(S\,|^m_\alpha\,T)\in k[\Mat_{rs}^m]$ by
\begin{equation}\label{eq.twisted_bidet1}
(S\,|^m_\alpha\,T)=\sum_{(\pi, \sigma)}{\rm sgn}(\pi){\rm sgn}(\sigma)\prod_{a\in F}x(Q(a))_{S(\pi(a)),\,T(\sigma(\alpha(a)))}\,,
\end{equation}
where the sum is over a set of representatives of the left cosets of $C_{P,Q,\alpha}$ in $C_F\times C_E$.
To keep the notation manageable, we suppressed ($P$ and) $Q$ in $(S\,|^m_\alpha\,T)$.
Clearly, if $k$ has characteristic $0$, then $(S\,|^m_{\alpha}\,T)$ equals \eqref{eq.twisted_bidet0} divided by $|C_{P,Q,\alpha}|$.
Note that the product in \eqref{eq.twisted_bidet1} can also be written as $$\prod_{a\in E}x(P(a))_{S(\pi(\alpha^{-1}(a))),\,T(\sigma(a))}\,.$$

In case $m=1$, $P$ and $Q$ are constant equal to $1$ and they play no role. We then omit $P,Q$ and the superscript $m$ in our notation
and instead of $x(1)_{ij}$ we write $x_{ij}$. So

\begin{equation}\label{eq.twisted_bidet2}
(S\,|_\alpha\,T)=\sum_{(\pi, \sigma)}{\rm sgn}(\pi){\rm sgn}(\sigma)\prod_{a\in F}x_{S(\pi(a)),\,T(\sigma(\alpha(a)))}\,,
\end{equation}
where the sum is over a set of representatives of the left cosets of\\
$C_\alpha=\{(\tau,\rho)\in C_F\times C_E\,|\,\alpha\circ\tau\circ\alpha^{-1}=\rho\}$ in $C_F\times C_E$.
Note that if $m=1$, $E=F$ and $\alpha=\id$ we get the ordinary bideterminant.

\begin{remgl}\label{rem.twisted_bidet}
If $X$ is a set of representatives for the left cosets of $\alpha C_F(Q)\alpha^{-1}\cap C_E(P)$ in $C_E$, then $C_F\times X$ is a 
set of representatives for the left cosets of $C_{P,Q,\alpha}$ in $C_F\times C_E$.
If we concatenate all matrices in an $m$-tuple column-wise, then we obtain an isomorphism $k[\Mat_{rs}^m]\cong k[\Mat_{r,ms}]$
which maps $x(l)_{ij}$ to $x_{i,(l-1)s+j}$. Now we have
$$(S\,|^m_\alpha\,T)=\sum_{\sigma\in X}\sgn(\sigma)(S\,|\,T^{\alpha,\sigma})\,,$$
where $T^{\alpha,\sigma}(a)=T(\sigma(\alpha(a)))+(Q(a)-1)s$ for $a\in F$.
Of course we could also work with a set $\tilde X$ of representatives for the left cosets of $C_F(Q)\cap\alpha^{-1}C_E(P)\alpha$ in
$\tilde C_F=\alpha^{-1}C_E\alpha$. Then the above sum would be over $\sigma\in\tilde X$ with $T^{\alpha,\sigma}(a)=T(\alpha(\sigma(a)))+(Q(a)-1)s$ for $a\in F$.

Similarly, if $X$ is a set of representatives for the left cosets of $C_F(Q)\cap\alpha^{-1}C_E(P)\alpha$ in $C_F$, then $X\times C_E$ is a 
set of representatives for the left cosets of $C_{P,Q,\alpha}$ in $C_F\times C_E$.
If we concatenate all matrices in an $m$-tuple row-wise, then we obtain an isomorphism $k[\Mat_{rs}^m]\cong k[\Mat_{mr,s}]$ which
maps $x(l)_{ij}$ to $x_{(l-1)r+i,j}$. Then we have
$$(S\,|^m_\alpha\,T)=\sum_{\pi\in X}\sgn(\pi)(S^{\alpha,\pi}\,|\,T)\,,$$
where $S^{\alpha,\pi}(a)=S(\pi(\alpha^{-1}(a)))+(P(a)-1)r$ for $a\in E$.
With $\tilde X$ a set of representatives for the left cosets of $\alpha C_F(Q)\alpha^{-1}\cap C_E(P)$ in $\tilde C_E=\alpha C_F\alpha^{-1}$, the above
sum would be over $\pi\in\tilde X$ with $S^{\alpha,\pi}(a)=S(\alpha^{-1}(\pi(a)))+(P(a)-1)r$ for $a\in E$.

In the case of the twisted bideterminants $(S\,|_\alpha\,T)$ for a single matrix $P$ and $Q$ play no role, so $C_F(Q)$ and $C_E(P)$ can
be replaced by $C_F$ and $C_E$, and in the definitions of $T^{\alpha,\sigma}$ and $S^{\alpha,\pi}$ the terms containing $Q$ or $P$ should be omitted.
The twisted bideterminants $(S\,|_\alpha\,T)$ are known as ``shuffle-products", and moving from the single matrix version of the first expression above to that of the second
is called ``overturn of the P-shuffle product onto the L-side", see \cite[Sect.~8-11]{Clau2}.
\end{remgl}
\begin{exa}
Take $F=(2,2)$ and $E=(2,1,1)$ and $Q$, $P$ and $\alpha$ as indicated below.
$$Q=\hspace{2mm}\begin{ytableau}1&2\\2&2\end{ytableau}\hspace{9mm}
\begin{ytableau}1&2\\3&4\end{ytableau}
\hspace{4mm}\overset{\alpha}{\longrightarrow}\hspace{4mm}
\begin{ytableau}1&3\\2\\4\end{ytableau}\hspace{9mm}
\begin{ytableau}1&2\\2\\2\end{ytableau}\hspace{2mm}=P$$
Here the second tableau of shape $F$ has been given the standard numbering and $\alpha$ maps each box of $F$ to the box of $E$ with the same number.
Clearly, $P\circ\alpha=Q$. Note that $\alpha_1:\begin{ytableau}1\end{ytableau}\hspace{2mm}{\longrightarrow}\hspace{2mm}\begin{ytableau}1\end{ytableau}\hspace{2mm}$ and
$\alpha_2:\begin{ytableau}\none&2\\3&4\end{ytableau}\hspace{2mm}{\longrightarrow}\hspace{2mm}\begin{ytableau}\none&3\\2\\4\end{ytableau}$ are special.

Now we consider certain twisted bideterminants in $k[\Mat_{23}^2]\cong k[\Mat_{43}]$. For $S$ a tableau of shape $F$ with entries $\le2$, $T$ a tableau of shape $E$ with entries $\le3$, and
$\alpha,P,Q$ as above we have $$(S\,|^2_\alpha\,T)=\left(\left.\begin{array}{ccc}s_{11}&s_{21}+2\\s_{12}+2\\s_{22}+2\end{array}\,\right\vert\ T\right)-
\left(\left.\begin{array}{ccc}s_{21}&s_{11}+2\\s_{12}+2\\s_{22}+2\end{array}\,\right\vert\ T\right)\,.$$
This can be seen by applying Remark~\ref{rem.twisted_bidet} to the set of representatives $X=\la(1,3)\ra\le C_F$ for the left cosets of $C_F(Q)\cap\alpha^{-1}C_E(P)\alpha=\la(2,4)\ra$ in $C_F$.
\end{exa}
\medskip

The coordinate ring $k[\Mat_{rs}^m]$ is $\mb N_0^m$-graded. Fix a multidegree $\nu\in\Sigma_t$.
Then one can construct a filtration $$M_1\supseteq M_2\supseteq\cdots\supseteq M_{q+1}=0$$
with sections isomorphic to $\nabla_{\GL_r}(\mu)\ot\nabla_{\GL_s}(\lambda)$, $\mu,\lambda$ suitable,
of the graded piece $M_1$ of degree $\nu$ of $k[\Mat_{rs}^m]$ as follows.
We use triples $(P,Q,\alpha)$ where $P$ and $Q$ are ordered tableaux of weight $\nu$ with shapes $\lambda$ of length $\le s$ and $\mu$ of length $\le r$ say, and
$\alpha:\mu\to\lambda$ is in a set of (admissible) representatives for the $m$-tuples of special semi-standard tableaux with shapes determined by $Q$ and weights determined by $P$.
See Section~\ref{s.skew_diagrams_and_tableaux}.

\begin{thmgl}\label{thm.good_filtration}
We can enumerate all the triples $(P,Q,\alpha)$ as above:
$$(P_1,Q_1,\alpha^1),(P_2,Q_2,\alpha^2),\ldots,(P_q,Q_q,\alpha^q),$$
$\lambda^i$ the shape of $P_i$, $\mu^i$ the shape of $Q_i$,
such that for all $i$ the span $M_i$ of all twisted bideterminants $(S\,|^m_{\alpha^j}\,T)$,
$j\ge i$, $S$ of shape $\mu^j$ with entries $\le r$, $T$ of shape $\lambda^j$ with entries $\le s$, is $\GL_r\times\GL_s$-stable and we have an isomorphism
$$(S\,|\, S_{\mu^i})\ot(T\,|\,S_{\lambda^i})\mapsto (S\,|^m_{\alpha^i}\,T)\text{\rm\ mod\ }M_{i+1}:\nabla_{\GL_r}(\mu^i)\ot\nabla_{\GL_s}(\lambda^i)\stackrel{\sim}{\to}M_i/M_{i+1}\,.$$
Furthermore, the twisted bideterminants $(S\,|^m_{\alpha^j}\,T)$, $1\le j\le q$, $S$ and $T$ as above and in addition semi-standard,
form a basis of the graded piece of degree $\nu$ of $k[\Mat_{rs}^m]$.
\end{thmgl}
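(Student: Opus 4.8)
The plan is to establish the filtration statement first and then extract the basis statement as a consequence of a dimension count. The backbone is the classical good filtration of $k[\Mat_{r,ms}]$ (equivalently $k[\Mat_{mr,s}]$) as a $\GL_r\times\GL_s$-module by bideterminants, due to De Concini--Eisenbud--Procesi, combined with Remark~\ref{rem.twisted_bidet}, which expresses each twisted bideterminant $(S\,|^m_\alpha\,T)$ as a signed sum of ordinary bideterminants $(S\,|\,T^{\alpha,\sigma})$ in $k[\Mat_{r,ms}]$ after column-wise concatenation. First I would fix $\nu\in\Sigma_t$ and restrict everything to the multidegree-$\nu$ piece $M_1$; under the concatenation isomorphism this is a single $\GL_r\times\GL_s$-submodule of the degree-$t$ part of $k[\Mat_{r,ms}]$ cut out by the homogeneity conditions on the blocks of columns. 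The ordinary bideterminant theory gives a good filtration of this whole degree-$t$ piece, and I would pull it back. The key combinatorial input is the description, recalled in Section~\ref{s.bidets_skew_Schur_Specht} (via \cite[Thm.~1.4]{Kou}, \cite[Thm.~II.4.11]{ABW}), of the sections of the restriction to $\prod\GL_{r_i}$ or equivalently the decomposition governed by ordered tableaux $P$ of weight $\nu$: this is precisely what makes the triples $(P,Q,\alpha)$ the right index set, $\alpha$ ranging over representatives of $m$-tuples of special semi-standard tableaux with shapes determined by $Q$ and weights determined by $P$.

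The main work is to choose the enumeration of the triples correctly and to verify the two assertions about $M_i$. I would order the triples first by the pair of shapes $(\mu^i,\lambda^i)$ in a suitable dominance-type order (so that "larger" shapes come first, matching how highest-weight pieces appear at the top of a good filtration), and within a fixed pair of shapes by an order on the $\alpha$'s lifted from the order on the underlying $m$-tuples of special semi-standard tableaux used in the ordinary skew-module filtrations of each tensor factor $\nabla_{\GL_{r_i}}(P^{-1}(i))$. The claim that each $M_i$ is $\GL_r\times\GL_s$-stable is the heart of the matter: I would prove it by showing that modulo the span of twisted bideterminants attached to strictly larger shapes (or to the same shapes but later $\alpha$), applying a generator of $\GL_r$ (respectively $\GL_s$) to $(S\,|^m_{\alpha^i}\,T)$ reproduces the skew-Schur-module straightening relations in the $S$-argument (respectively $T$-argument), so that $M_i$ is spanned, modulo $M_{i+1}$, by a copy of $\nabla_{\GL_r}(\mu^i)\ot\nabla_{\GL_s}(\lambda^i)$ and the surjection $(S\,|\,S_{\mu^i})\ot(T\,|\,S_{\lambda^i})\mapsto(S\,|^m_{\alpha^i}\,T)\bmod M_{i+1}$ is well-defined. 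That this map is an isomorphism, not merely a surjection, then follows by comparing with the characteristic-zero picture: over $\mb Q$ (or $\mb C$) the formal character of $M_1$ is known, and summing $\mathrm{ch}\,\nabla_{\GL_r}(\mu^i)\cdot\mathrm{ch}\,\nabla_{\GL_s}(\lambda^i)$ over all triples exhausts it, by the skew-Schur/Littlewood--Richardson identities recalled in Section~\ref{s.bidets_skew_Schur_Specht}; since characters are independent of $k$, there is no room for a kernel.

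For the final sentence about the basis, I would argue as follows. By the isomorphisms just established, $M_i/M_{i+1}\cong\nabla_{\GL_r}(\mu^i)\ot\nabla_{\GL_s}(\lambda^i)$ has as a basis the images of $(S\,|\,S_{\mu^i})\ot(T\,|\,S_{\lambda^i})$ with $S$ semi-standard of shape $\mu^i$ with entries $\le r$ and $T$ semi-standard of shape $\lambda^i$ with entries $\le s$ --- this is the standard basis of a tensor product of skew Schur modules. Hence $\{(S\,|^m_{\alpha^i}\,T)\bmod M_{i+1}\}$ with $S,T$ semi-standard is a basis of each section, and lifting through the finite filtration $M_1\supseteq\cdots\supseteq M_{q+1}=0$ shows that the full collection, over all $1\le i\le q$ and all semi-standard $S,T$ of the appropriate shapes, is a basis of $M_1$, the degree-$\nu$ piece of $k[\Mat_{rs}^m]$; ranging over $\nu$ finishes it.

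I expect the main obstacle to be the stability of $M_i$ together with the precise choice of enumeration: one must show that the "error terms" produced when $\GL_r$ or $\GL_s$ acts on a twisted bideterminant --- which come from the straightening of non-semi-standard tableaux and from the shuffle structure in Remark~\ref{rem.twisted_bidet} --- land in twisted bideterminants indexed by triples that are genuinely later in the enumeration. This is where the admissibility and specialness conditions on $\alpha$ (and the uniqueness of special representatives, \cite[Thm.~3]{T3}) must be used carefully, essentially porting the argument behind the ordinary skew-module filtration \cite[Thm.~II.4.11]{ABW}, \cite[Thm.~1.4]{Kou} through the twist; getting the bookkeeping of the order right, so that no circularity arises, is the delicate point. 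The character computation that upgrades surjections to isomorphisms is routine once the combinatorial identities of Section~\ref{s.bidets_skew_Schur_Specht} are in hand.
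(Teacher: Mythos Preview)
Your strategy differs substantially from the paper's, and the gap you yourself flag (``stability of $M_i$ together with the precise choice of enumeration'') is exactly where the paper's key idea lives, and where your proposal is too vague to succeed as written.

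The paper does \emph{not} build the filtration bottom-up from twisted bideterminants and then prove stability by a straightening argument. Instead it works top-down: it uses the row-wise concatenation $k[\Mat_{rs}^m]\cong k[\Mat_{mr,s}]$ (not the column-wise one you emphasise), starts from the classical $\GL_{mr}\times\GL_s$ good filtration with sections $\nabla_{\GL_{mr}}(\lambda)\otimes\nabla_{\GL_s}(\lambda)$, refines via \cite[Thm.~II.4.11]{ABW} to sections $\big(\bigotimes_j\nabla_{\GL_r}(P^{-1}(j))\big)\otimes\nabla_{\GL_s}(\lambda)$, and then makes the crucial move: it identifies $\bigotimes_j\nabla_{\GL_r}(P^{-1}(j))$ with the \emph{single} skew Schur module $\nabla_{\GL_r}(E_P)$, where $E_P$ is the skew shape obtained by stacking the $P^{-1}(j)$ diagonally. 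This reduces everything to the one-matrix case, so that Kouwenhoven's \cite[Thm.~1.5]{Kou} can be quoted outright to give the final refinement with sections $\nabla_{\GL_r}(\mu)\otimes\nabla_{\GL_s}(\lambda)$. The only remaining work is a bookkeeping check: the elements $(S\,|_{\ov\alpha}\,S_{E_P})$ that span Kouwenhoven's filtration correspond, under the ABW section isomorphism, to the twisted bideterminants $(S\,|^m_\alpha\,S_\lambda)$, because $C_\mu\cap\ov\alpha^{-1}C_{E_P}\ov\alpha = C_\mu(Q)\cap\alpha^{-1}C_\lambda(P)\alpha$. No new straightening is proved; stability and the section isomorphisms are inherited from the cited theorems, and no character argument is needed.

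Your direct route is not hopeless, but the ``error terms land later'' step you defer is essentially a reproof of \cite[Thm.~1.5]{Kou} in situ, which is substantial. Moreover your proposed enumeration---sort first by the pair of shapes $(\mu^i,\lambda^i)$---is not the one that makes this work: the correct order (recorded in the remark immediately after the proof) is primarily by $P$, via the tuple $(P^{-1}(\{1,\ldots,m-i\}))_i$, and only then by a tableau $S_{Q,\alpha}$ built from $(Q,\alpha)$; the paper's commented-out example shows explicitly that the filtration is \emph{not} compatible with sorting by the shape of $Q$. So even if you carried out the straightening, your ordering would need to be replaced before the induction could close.
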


\begin{proof}
We use the isomorphism $k[\Mat_{rs}^m]\cong k[\Mat_{mr,s}]$, see Remark~\ref{rem.twisted_bidet}. Let $t$ be an integer $\ge 0$.
We start with the well-known (descending) $\GL_{mr}\times\GL_s$-filtration of the piece of degree $t$ of $k[\Mat_{mr,s}]$ with sections isomorphic to
\begin{equation}\label{eq.sectioniso1}
\nabla_{\GL_{mr}}(\lambda^i)\ot\nabla_{\GL_s}(\lambda^i).
\end{equation}
Here the $\lambda^i$ are the partitions of $t$ of length $\le\min(mr,s)$. The isomorphisms to the sections of the filtration are given by
$$(S\,|\, S_{\lambda^i})\ot(T\,|\,S_{\lambda^i})\mapsto(S\,|\,T)\,\text{\ modulo the $(i+1)$-th filtration space}.$$
After restricting the left multiplication action to $\GL_r^m$ we can decompose the above filtration according to the multidegree in $\mb N_0^m$.
From now on we focus on the piece of multidegree $\nu\in\Sigma_t$.
By repeatedly applying \cite[Thm.~II.4.11]{ABW} (see also \cite[Thm.~1.4]{Kou} and Remark~2 after it) to $\nabla_{\GL_{mr}}(\lambda^i)$
we can refine the above filtration to a filtration with sections isomorphic to
\begin{equation}\label{eq.sectioniso2}
\Big(\bigotimes_{j=1}^m\nabla_{\GL_r}\big(P_i^{-1}(j)\big)\Big)\ot\nabla_{\GL_s}(\lambda^i).
\end{equation}
Here the $\lambda^i$ are suitably redefined, the $P_i$ go through all ordered tableaux of shape $\lambda^i$ with weight $\nu$, and the Levi $\GL_r^m$ acts on the first factor.
The section-isomorphism of \cite[Thm.~II.4.11]{ABW} is given by shifting the numbers in each tableau of shape $P_i^{-1}(j)$ by $(j-1)r$, so the result has its entries in $(j-1)r+\{1,\ldots,r\}$,
and then piecing the resulting tableaux of shapes $P_i^{-1}(j)$ together according to $P_i$ to a tableau of shape $\lambda^i$. 
Now we restrict the first factor of \eqref{eq.sectioniso2} to the diagonal copy of $\GL_r$ in $\GL_r^m$ and we have
\begin{equation}\label{eq.iso3}
\bigotimes_{j=1}^m\nabla_{\GL_r}\big(P_i^{-1}(j)\big)\cong\nabla_{\GL_r}(E_{P_i})\,,
\end{equation}
where for $P$ an ordered tableau with entries $\le m$ we define $E_P=E_{(P^{-1}(1),\ldots,P^{-1}(m))}$
and for an $m$-tuple $(D_1,\ldots,D_m)$ of skew Young diagrams 
$$
\vspace{1cm}{\begin{xy}
	(-26,-7)*={\begin{ytableau}\none\\ 
	\none[E_{(D_1,\ldots,D_m)}=\qquad\qquad\ ]\\
	\none\end{ytableau}},
(-9.9,-8)*={\begin{ytableau}
\none&\none&\none[\quad D_1]\\
\none&\none[\iddots]&\none\\
\none[D_m\quad]&\none&\none
\end{ytableau}}
\end{xy}}
$$
where each row or column contains boxes from at most one skew tableau $D_j$.
Now we apply \cite[Thm.~1.5]{Kou} and we can refine our previous filtration to a filtration with sections $$\nabla_{\GL_r}(\mu^i)\ot\nabla_{\GL_s}(\lambda^i)\,.$$ Here the $\lambda^i$ are again suitably redefined and the $\mu^i$ have length $\le r$. Furthermore, the labelling is coming from triples $(P,\mu,\ov\alpha)$ where $P$ is an ordered tableau of weight $\nu$, $\mu$ a partition of $t$ and $\ov\alpha:\mu\to E_P$ goes through a set of admissible representatives for the special semi-standard tableaux of shape $\mu$ and weight the tuple of row lengths of $E_P$.
These triples are in one-one correspondence with the triples $(P,Q,\alpha)$ mentioned earlier.

We now have to check that our filtration is indeed given by spans of twisted bideterminants.
From Remark~\ref{rem.twisted_bidet} it is clear that under the section-isomorphism \eqref{eq.sectioniso1} the element $(S\,|^m_\alpha\, S_{\lambda^i})\ot(T\,|\,S_{\lambda^i})$, $S$ of shape $\mu$ with entries $\le r$, $\alpha:\mu\to\lambda^i$, $T$ of shape $\lambda^i$ with entries $\le s$, is mapped to $(S\,|^m_\alpha\,T)$ modulo the $(i+1)$-th filtration space. So it now suffices to show that at ``stage \eqref{eq.iso3}" the elements $(S\,|^m_\alpha\, S_{\lambda^i})$ correspond under the isomorphism \eqref{eq.iso3} combined with the section isomorphism of \cite[Thm.~II.4.11]{ABW} to the elements defining the filtration of $\nabla_{\GL_r}(E_{P_i})$ from \cite[Thm.~1.5]{Kou}.

For this we focus on one particular $i$ which we suppress in the notation. If $\alpha:\mu\to\lambda$ is an admissible representative of an $m$-tuple of special semi-standard tableaux, then the diagram mapping $\ov\alpha:\mu\to E_P$ whose restrictions $:Q^{-1}(j)\to P^{-1}(j)$ are the same as those of $\alpha$, is an admissible representative of the special semi-standard tableau $T=S_{E_P}\circ\ov\alpha$ of shape $\mu$. The elements defining the filtration of $\nabla_{\GL_r}(E_{P})$ from the proof of \cite[Thm.~1.5]{Kou} 
are $(S\,|_{\ov\alpha}\,S_{E_P})$, $S$ of shape $\mu$ with entries $\le r$. Here one should bear in mind that in \cite{Kou} the bideterminants are formed row-wise rather than column-wise, and that there $\ov\alpha^{^{-1}}$ is used rather than $\ov\alpha$: the map $f_T$ on page 93 of \cite{Kou} satisfies (after transposing) $T\circ f_T=S_{E_P}$, and it corresponds to the inverse of our $\ov\alpha$.\footnote{Actually the $\ov\alpha$ corresponding to the $f_T$ from \cite{Kou} are the (unique) special representatives of the special semi-standard tableaux $T$ of shape $\mu$ and weight the tuple of row lengths of $E_P$, but it is clear that the arguments there work for any choice of admissible representatives $\ov\alpha$. Furthermore, it is clear from the proof of Claim 2 on p 94 of \cite{Kou} that the filtration of $\nabla_{\GL_r}(E_P)$ does not depend on the choice of representing $\ov\alpha$'s.} 
By Remark~\ref{rem.twisted_bidet} we have
$$(S\,|_{\ov\alpha}\,S_{E_P})=\sum_{\pi\in X}{\sgn(\pi)}(S^{\ov\alpha,\pi}\,|\,S_{E_P})\,,$$
where $X$ is a set of representatives for the left cosets of $C_\mu\cap{\ov\alpha}^{^{-1}}C_{E_P}\ov\alpha$ in $C_\mu$ and
$S^{\ov\alpha,\pi}(a)=S(\pi(\ov\alpha^{^{-1}}(a)))$ for $a\in E_P$.
Now we have $C_\mu\cap{\ov\alpha}^{^{-1}}C_{E_P}\ov\alpha=C_\mu(Q)\cap\alpha^{-1}C_\lambda(P)\alpha$, so, by Remark~\ref{rem.twisted_bidet} we have for the same set $X$
$$(S\,|^m_\alpha\,S_\lambda)=\sum_{\pi\in X}{\sgn(\pi)}(S^{\alpha,\pi}\,|\,S_\lambda)\,,$$
where $S^{\alpha,\pi}(a)=S(\pi(\alpha^{-1}(a)))+(P(a)-1)r$ for $a\in\lambda$.
Under the isomorphism \eqref{eq.iso3} combined with the section isomorphism of \cite[Thm.~II.4.11]{ABW} $S^{\ov\alpha,\pi}$ corresponds to $S^{\alpha,\pi}$, that is,
$(S^{\ov\alpha,\pi}\,|\,S_{E_P})$ is mapped to $(S^{\alpha,\pi}\,|\,S_\lambda)$ modulo the filtration space labelled by ``the next $P$". So, by the above two equations,
$(S\,|_{\ov\alpha}\,S_{E_P})$ is mapped to $(S\,|^m_\alpha\, S_{\lambda})$ modulo the filtration space labelled by the next $P$.
\end{proof}

\begin{corgl}
Let $\lambda,\mu$ be partitions of $t$ with $l(\mu)\le r$ and $l(\lambda)\le s$ and let $\nu\in\Sigma_t$.
Then the elements $(S_\mu\,|^m_\alpha\,S_\lambda)$, $P,Q$ ordered tableaux of shapes $\lambda$ and $\mu$, both of weight $\nu$, and $\alpha$ in a set of representatives for the $m$-tuples of special semi-standard tableaux with shapes determined by $Q$ and weights determined by $P$, form a basis of the piece of degree $\nu$ of $k[\Mat_{rs}^m]^{U_r\times U_s}_{(\mu,\lambda)}$.
\end{corgl}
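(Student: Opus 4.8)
The plan is to deduce this from Theorem~\ref{thm.good_filtration} by passing to the highest weight space. Keep the notation of that theorem, so $M_1\supseteq M_2\supseteq\cdots\supseteq M_{q+1}=0$ is the filtration of the degree-$\nu$ piece of $k[\Mat_{rs}^m]$ with $M_i/M_{i+1}\cong\nabla_{\GL_r}(\mu^i)\ot\nabla_{\GL_s}(\lambda^i)$, indexed by triples $(P_i,Q_i,\alpha^i)$ with $Q_i$ of shape $\mu^i$ and $P_i$ of shape $\lambda^i$, and write $W$ for the degree-$\nu$ part of $k[\Mat_{rs}^m]^{U_r\times U_s}_{(\mu,\lambda)}$. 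Set $I=\{i\mid(\mu^i,\lambda^i)=(\mu,\lambda)\}$. First I would note that the elements listed in the statement are exactly the twisted bideterminants $(S_\mu\,|^m_{\alpha^i}\,S_\lambda)$ for $i\in I$, since the triples $(P,Q,\alpha)$ with $Q$ of shape $\mu$ and $P$ of shape $\lambda$ occurring in Theorem~\ref{thm.good_filtration} are precisely those indexed by $I$. Each such $(S_\mu\,|^m_{\alpha^i}\,S_\lambda)$ lies in $W$: over $\mb Q$ one has $u_{P_i,Q_i,\alpha^i}=|C_{P_i,Q_i,\alpha^i}|\cdot(S_\mu\,|^m_{\alpha^i}\,S_\lambda)$ by the coset lemma above, so the integral polynomial $(S_\mu\,|^m_{\alpha^i}\,S_\lambda)$ lies in $\mb Z[\Mat_{rs}^m]\cap\mb Q[\Mat_{rs}^m]^{U_r\times U_s}_{(\mu,\lambda)}$, and since the $\GL_r\times\GL_s$-action is defined over $\mb Z$ its reduction lies in $W$. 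It then remains to prove that these $|I|$ elements are linearly independent and that $\dim W=|I|$.

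Linear independence comes for free: the canonical tableaux $S_\mu$ and $S_\lambda$ are semi-standard of shapes $\mu$ and $\lambda$ with entries $\le r$ and $\le s$, so for $i\in I$ the element $(S_\mu\,|^m_{\alpha^i}\,S_\lambda)$ is one of the semi-standard twisted bideterminants $(S\,|^m_{\alpha^j}\,T)$ that, by the last assertion of Theorem~\ref{thm.good_filtration}, form a basis of the degree-$\nu$ piece of $k[\Mat_{rs}^m]$. Hence $\dim W\ge|I|$.

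For the opposite inequality I would intersect the filtration with $W$: put $N_i=M_i\cap W$, so that $W=N_1\supseteq\cdots\supseteq N_{q+1}=0$. The inclusion $M_i\hookrightarrow M_1$ induces an injection $N_i/N_{i+1}\hookrightarrow M_i/M_{i+1}$ (its kernel is $M_{i+1}\cap N_i=N_{i+1}$) with image inside $(M_i/M_{i+1})^{U_r\times U_s}_{(\mu,\lambda)}$. For the external tensor product one has $(\nabla_{\GL_r}(\mu^i)\ot\nabla_{\GL_s}(\lambda^i))^{U_r\times U_s}_{(\mu,\lambda)}=\nabla_{\GL_r}(\mu^i)^{U_r}_\mu\ot\nabla_{\GL_s}(\lambda^i)^{U_s}_\lambda$, and for a partition $\eta$ of length $\le r$ the space $\nabla_{\GL_r}(\eta)^{U_r}$ is the one-dimensional $\eta$-weight line, spanned by the bideterminant $(S_\eta\,|\,S_\eta)$, and vanishes in every other dominant weight (and similarly for $\GL_s$); this is classical and, if one wants, follows from $\nabla_{\GL_r}(\eta)$ having a good filtration together with \cite{Jan}. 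Therefore $\dim(N_i/N_{i+1})\le\delta_{(\mu^i,\lambda^i),(\mu,\lambda)}$, whence $\dim W=\sum_i\dim(N_i/N_{i+1})\le|I|$. Combining the two bounds gives $\dim W=|I|$, and the $|I|$ linearly independent elements $(S_\mu\,|^m_{\alpha^i}\,S_\lambda)$, $i\in I$, are then a basis of $W$.

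The whole deduction is essentially bookkeeping: the one genuinely external input is the classical collapse of $\nabla_{\GL_r}(\eta)^{U_r}$ to its highest weight line, and the real work is already carried by Theorem~\ref{thm.good_filtration}, so I do not expect a serious obstacle here.
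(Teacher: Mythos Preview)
Your argument is correct and follows the same overall strategy as the paper: deduce linear independence from the basis statement in Theorem~\ref{thm.good_filtration}, and match this against the dimension of the highest weight space, which is the number of sections with label $(\mu,\lambda)$ in the good filtration. The only differences are cosmetic: the paper verifies that $(S_\mu\,|^m_\alpha\,S_\lambda)$ is a highest weight vector directly from the expression in Remark~\ref{rem.twisted_bidet} (as a signed sum of bideterminants $(S^{\alpha,\pi}\,|\,S_\lambda)$, each visibly $U_s$-invariant of weight $\lambda$, and symmetrically for $U_r$), rather than via your reduction-from-$\mb Q$ argument; and for the dimension count the paper simply invokes \cite[Prop.~II.4.13]{Jan} on good filtrations, whereas you reprove that special case by hand by intersecting the filtration with $W$ and using $\nabla_{\GL_r}(\eta)^{U_r}_\mu=0$ for $\eta\ne\mu$.
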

\begin{proof}
It is easy to see, using Remark~\ref{rem.twisted_bidet} for example, that the elements $(S_\mu\,|^m_\alpha\,S_\lambda)$ are highest weight vectors of the given weight. Furthermore, they are linearly independent by Theorem~\ref{thm.good_filtration}. On the other hand it follows from standard properties of good filtrations, see \cite[Prop.~II.4.13]{Jan}, that the dimension of $k[\Mat_{rs}^m]^{U_r\times U_s}_{(\mu,\lambda)}$ is equal to the number of sections $\nabla_{\GL_r}(\mu^i)\ot\nabla_{\GL_s}(\lambda^i)$ with $(\lambda_i,\mu_i)=(\lambda,\mu)$ in a good filtration of $k[\Mat_{rs}^m]$. But this is equal to the number of elements of our linearly independent set.
\end{proof}

Finally we give a version for the above corollary for the $\GL_r\times\GL_s$-action on $k[\Mat_{rs}^m]$ defined by $((A,B)\cdot f)(\un X)=f((A^{-1}X_iB)_{1\le i\le m})$, that is,
we twist the $\GL_r$-action we considered previously with the inverse transpose. We define the {\it anti-canonical tableau} $\tilde S_\mu$ of shape $\mu$ by
$\tilde S_\mu(a)=r-a_1+1$, for $a\in\mu$ where $a_1$ is the row index of $a$.
For a tuple $\mu$ of integers of length $\le r$ we denote by $\mu^{\rm rev}$ the reverse of the $r$-tuple obtained from $\mu$ by extending it with zeros.

\begin{corgl}
Let $\lambda,\mu$ be partitions of $t$ with $l(\mu)\le r$ and $l(\lambda)\le s$  and let $\nu\in\Sigma_t$.
Then the elements $(\tilde S_\mu\,|^m_\alpha\,S_\lambda)$, $P,Q$ ordered tableaux of shapes $\lambda$ and $\mu$, both of weight $\nu$,
and $\alpha$ in a set of representatives for the $m$-tuples of special semi-standard tableaux with shapes determined by $Q$ and weights determined by $P$,
form a basis of the piece of degree $\nu$ of $k[\Mat_{rs}^m]^{U_r\times U_s}_{(-\mu^{\rm rev},\lambda)}$.
\end{corgl}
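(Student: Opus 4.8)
The plan is to deduce this from Corollary~1 by transporting its basis through the algebra automorphism of $k[\Mat_{rs}^m]$ which reverses the rows of every matrix. Let $w_0\in\GL_r$ be the permutation matrix with $(w_0)_{ij}=\delta_{i,r+1-j}$ and let $\Phi\colon\Mat_{rs}^m\to\Mat_{rs}^m$ be the linear automorphism $\Phi(\un X)=(w_0X_1,\ldots,w_0X_m)$. Its comorphism $\varphi=\Phi^*$ is the algebra automorphism of $k[\Mat_{rs}^m]$ with $\varphi\big(x(l)_{ij}\big)=x(l)_{r+1-i,j}$; clearly it preserves the multigrading. First I would record the intertwining property. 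Let $\psi\colon\GL_r\to\GL_r$ be the automorphism $\psi(A)=w_0(A')^{-1}w_0$, write $g\cdot f$ for the $\GL_r\times\GL_s$-action of the introduction (the one used in Corollary~1) and $g\cdot'f$ for the inverse-transpose action occurring in the statement. Using $\psi(A)'w_0=w_0A^{-1}$, a direct evaluation at $\un X$ gives
$$(A,B)\cdot'\varphi(f)=\varphi\big((\psi(A),B)\cdot f\big)\qquad\text{for all }A\in\GL_r,\ B\in\GL_s,\ f\in k[\Mat_{rs}^m].$$

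Next I would translate this into a statement about highest weight spaces. Conjugation by $w_0$ reverses the orders of rows and of columns, so $\psi$ maps $U_r$ onto $U_r$; and $\psi$ sends ${\rm diag}(t_1,\ldots,t_r)\in T_r$ to ${\rm diag}(t_r^{-1},\ldots,t_1^{-1})$, so it carries the character $\mu$ of $T_r$ to $-\mu^{\rm rev}$. Combined with the intertwining identity, this shows that $\varphi$ maps a $U_r\times U_s$-highest weight vector of weight $(\mu,\lambda)$ for the action $\cdot$ to one of weight $(-\mu^{\rm rev},\lambda)$ for the action $\cdot'$; since $\varphi$ is a graded algebra automorphism (with inverse $\varphi$ itself, as $w_0^2=\id$), it restricts to a degree-preserving linear isomorphism
$$\varphi\colon k[\Mat_{rs}^m]^{U_r\times U_s}_{(\mu,\lambda)}\ \stackrel{\sim}{\longrightarrow}\ k[\Mat_{rs}^m]^{U_r\times U_s}_{(-\mu^{\rm rev},\lambda)}$$
(left side for $\cdot$, right side for $\cdot'$), and in particular on the pieces of degree $\nu$.

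Finally I would compute $\varphi$ on the basis from Corollary~1. Since $\tilde S_\mu(a)=r+1-a_1=r+1-S_\mu(a)$, applying $\varphi$ to the expression \eqref{eq.twisted_bidet1} for $(S_\mu\,|^m_\alpha\,S_\lambda)$ replaces each factor $x(Q(a))_{S_\mu(\pi(a)),\,S_\lambda(\sigma(\alpha(a)))}$ by $x(Q(a))_{\tilde S_\mu(\pi(a)),\,S_\lambda(\sigma(\alpha(a)))}$ and leaves the summation set (which depends only on $P,Q,\alpha$, not on $S,T$) unchanged, whence $\varphi\big((S_\mu\,|^m_\alpha\,S_\lambda)\big)=(\tilde S_\mu\,|^m_\alpha\,S_\lambda)$. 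As the elements $(S_\mu\,|^m_\alpha\,S_\lambda)$ form a basis of the degree-$\nu$ piece of $k[\Mat_{rs}^m]^{U_r\times U_s}_{(\mu,\lambda)}$ by Corollary~1, their images $(\tilde S_\mu\,|^m_\alpha\,S_\lambda)$ form a basis of the degree-$\nu$ piece of $k[\Mat_{rs}^m]^{U_r\times U_s}_{(-\mu^{\rm rev},\lambda)}$, as claimed. The only point requiring real attention is the bookkeeping in the middle step — verifying that $\psi$ preserves $U_r$ exactly and turns $\mu$ into precisely $-\mu^{\rm rev}$ (correct reversal, correct sign); the rest is a routine unwinding of definitions.
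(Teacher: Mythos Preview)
Your proof is correct and is precisely the argument the paper leaves implicit: Corollary~2 is stated without proof, the paper only indicating that one twists the $\GL_r$-action by the inverse transpose, and your $w_0$-conjugation automorphism $\psi$ is exactly the standard way to make that twist preserve $U_r$ and turn the weight $\mu$ into $-\mu^{\rm rev}$. The computation $\varphi\big((S_\mu\,|^m_\alpha\,S_\lambda)\big)=(\tilde S_\mu\,|^m_\alpha\,S_\lambda)$ then matches the paper's definition of the anti-canonical tableau, so your approach and the paper's are the same.
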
 

\begin{remsgl}
1.\ We now extract from the proof of Theorem~\ref{thm.good_filtration} how the triples $(P,Q,\alpha)$ are enumerated. First we order the $P$'s by identifying each $P$ with the tuple of Young diagrams (i.e. partitions) $P^{-1}(\{1,\ldots,m-i\})_{0\le i\le m-1}$ and ordering these lexicographically, where the partitions are themselves also ordered lexicographically. For a fixed $P$ we order the pairs $(Q,\alpha)$ as follows. For each $i$ we let $S_i$ be the tableau obtained by shifting the entries of $S_{P^{-1}(i)}\circ\alpha_i$ by $\sum_{j=0}^{i-1}r_j$, where $r_j$ is the number of rows of $P^{-1}(j)$. Here the $\alpha_i$ are defined as in Section~\ref{s.skew_diagrams_and_tableaux}. Let $S_{Q,\alpha}$ be the tableau of the same shape as $Q$ obtained by piecing the $S_i$ together according to $Q$. Then we say that $(Q^1,\alpha^1)>(Q^2,\alpha^2)$ if the standard enumeration of $S_{Q^1,\alpha^1}$ is lexicographically less than that of $S_{Q^2,\alpha^2}$. Now we order the triples $(P,Q,\alpha)$ lexicographically by first comparing the $P$-component and then the $(Q,\alpha)$-component. Finally, we enumerate the triples $(P,Q,\alpha)$ in decreasing order.\\
2.\ Let $E$ and $F$ be skew Young diagrams with $t$ boxes. In \cite[Thm.~3]{T3} a basis was given of the space of coinvariants $\big(S(E)\ot S(F)\big)_{\Sym_t}$
labelled by admissible representatives of special semi-standard tableaux of shape $F$ and weight the tuple of row lengths of $E$.
We give a characteristic free version of this result and two interpretations.

Let $r$ be $\ge$ the number of rows of $F$ and let $s$ be $\ge$ the number of rows of $E$, then the twisted bideterminants $(S_F\,|_\alpha\,S_E)\in k[\Mat_{rs}]$ where
$\alpha$ goes through a set of admissible representatives of special semi-standard tableaux of shape $F$ and weight the tuple of row lengths of $E$,
are linearly independent.

This can be deduced from \cite{Kou} as follows. Write $F=\mu/\tilde\mu$ and take $\ov E$ to be $E$ with $\tilde\mu$ above and
to the right of it in such a way that they have no rows or columns in common. We use the definition of Schur modules from Remark~\ref{rems.bidet}.1 which uses the right multiplication action.
If we combine this with Remark~\ref{rems.bidet}.2 
we obtain an isomorphism $\nabla_{\GL_s}(F)\stackrel{\sim}{\to}\nabla_{\GL_{s_1}}(\mu)^{U_{s'}}_{\tilde\mu}$ where $s_1=s'+s$.
By Remark~\ref{rem.twisted_bidet} this isomorphism maps $(S_F\,|_\alpha\,S_E)$ to $(S_\mu\,|_{\ov\alpha}\,S_{\ov E})$ where $\ov\alpha:\mu\to\ov E$ is given by $\ov\alpha|_F=\alpha$ and $\alpha|_{\tilde\mu}=\id$. 
For $\alpha$ as above, $\ov\alpha$ goes through a set of representatives for the special tableaux of shape $\mu$ and weight the tuple of row lengths of $\ov E$.
Since the elements $(S_\mu\,|_{\ov\alpha}\,S_{\ov E})$ are linearly independent by the proof of \cite[Thm.~1.5]{Kou}, the result follows.

Now we give two interpretations of this result. Firstly, the span of the above bideterminants can be seen as $k\ot_{\mb Z}N_{\mb Z}$, where $N_{\mb Z}$ is the intersection of $\big(S_{t,\mb Q}(E)\ot S_{t,\mb Q}(F)\big)_{\Sym_t}$ with the obvious $\mb Z$-form of $\big (M_{t,\mb Q}(E)\ot M_{t,\mb Q}(F)\big)_{\Sym_t}$. Secondly, when $r\ge$ the number of rows of $F$, this span can be identified with $\Hom_{\GL_r}(\Delta_{\GL_r}(F),\nabla_{\GL_r}(E))$. Indeed we have by \cite[Thm.~1.1(g)]{Kou}
\begin{align*}
D_L(U,S_F)\cdot (S_F\,|\,T_F)&=(\boxed{U}\,|\,T_F)\,,\\
D_L(U,S_F)\cdot (S_F\,|_\alpha\,S_E)&=(\boxed{U}\,|_\alpha\,S_E)\,.
\end{align*}
Here we adapted the notation to that of our paper: We use $T_E$ and $S_E$ instead of $\overset{*\ }{T_E}$ and $T_E$. Furthermore, we associate bideterminants column-wise rather than row-wise,
so the bideterminants of shape $\lambda$ from \cite{Kou} have shape $\lambda'$ in our notation, $\boxed{U}$ means sum over all tableaux {\it row} equivalent to $U$ etc.
So the module $\Delta_{\GL_r}(F)$ is cyclic generated by $(S_F\,|\,T_F)$ and the homomorphisms 
$$(\boxed{U}\,|\,T_F)\mapsto(\boxed{U}\,|_\alpha\,S_E):\Delta_{\GL_r}(F)\to\nabla_{\GL_r}(E)$$
are linearly independent, since their images of the generator $(S_F\,|\,T_F)$ are linearly independent by the above result. We have seen in Section~\ref{s.bidets_skew_Schur_Specht} that their number is equal to the dimension of $\Hom_{\GL_r}(\Delta_{\GL_r}(F),\nabla_{\GL_r}(E))$, so they must form a basis. In general the above homomorphisms will always span $\Hom_{\GL_r}(\Delta_{\GL_r}(F),\nabla_{\GL_r}(E))$ by \cite[Prop.~1.5(i)]{Don2} applied to $\lambda=t\varepsilon_1$, $G=\GL_{r'}$ for some $r'\ge$ the number of rows of $F$ and $\Sigma$ the root system of $\GL_r$. 
For dimension reasons, see Section~\ref{s.bidets_skew_Schur_Specht}, these homomorphisms will then also form a basis when $r\ge$ the number of rows of $E$.\\
3.\ Assume $r=r_1+\cdots+r_m$ for certain integers $r_j>0$.
By similar arguments as in the proof of Theorem~\ref{thm.good_filtration} one can construct a ``good" $\big(\prod_{j=1}^m\GL_{r_j}\big)\times\GL_s$-filtration
of the degree $\nu$ piece of $k[\Mat_{rs}]$  using a spanning set labelled by triples $\big(\lambda,(\mu^1,\ldots,\mu^m),\alpha\big)$, where $\lambda$ is a
partition of $t=|\nu|$ of length $\le s$, $(\mu^1,\ldots,\mu^m)$ is an $m$-tuple of partitions with $\mu^j$ of length $\le r_j$ and $|\mu^1|+\cdots+|\mu^m|=t$, 
and where $\alpha:E_{(\mu^1,\ldots,\mu^m)}\to\lambda$ goes through a set of admissible representatives for the special semi-standard tableaux of
shape $E_{(\mu^1,\ldots,\mu^m)}$ and weight $\lambda$.
These triples are in one-one correspondence with the triples $(P,(\mu^1,\ldots,\mu^m),(\alpha_1,\ldots,\alpha_m))$, where
$P$ is an ordered tableau of weight $\nu$, $\big(\mu^1,\ldots,\mu^m\big)$ is an $m$-tuple of partitions
with $\mu_j$ of length $\le r_j$ and $|\mu^1|+\cdots+|\mu^m|=t=|\nu|$, and each $\alpha_j:\mu_j\to P^{-1}(j)$ goes through a set of admissible representatives for the
special semi-standard tableaux of shape $\mu_j$ and weight the tuple of row lengths of $P^{-1}(j)$.
The filtration spaces are spanned by twisted bideterminants $(S\,|_\alpha\, T)$, where $S$ is of shape $E_{(\mu^1,\ldots,\mu^m)}$ with entries $\le r$, satisfying $S^{-1}\big((\sum_{l=1}^{j-1}r_l+\{1,\ldots,r_j\}\big)=\mu^j\subseteq E_{(\mu^1,\ldots,\mu^m)}$ for all $j$, $T$ is of shape $\lambda$ with entries $\le s$ and $\alpha:E_{(\mu^1,\ldots,\mu^m)}\to\lambda$ is as above.
\end{remsgl}

\section{Highest weight vectors for the conjugation action of $\GL_n$ on polynomials}\label{s.conjugation_action}
Firstly, let us introduce some further notation. For $n$ a natural number and $\lambda$, $\mu$ partitions with $l(\lambda)+l(\mu)\le n$, define the descending $n$-tuple
$$[\lambda,\mu]:=(\lambda_1,\ldots,\lambda_{l(\lambda)},0,\ldots,0,-\mu_{l(\mu)},\ldots,-\mu_1).$$

The group $\GL_n$ acts on $\Mat_n$ via the conjugation action, given by $S\cdot A=SAS^{-1}$ and therefore on the coordinate ring $k[\Mat_n]$ via $(S\cdot f)(A)=f(S^{-1}AS)$
Note that the nilpotent cone $\mc N_n=\{A\in\Mat_n\,|\,A^n=0\}$ is under this action a $\GL_n$-stable closed subvariety of $\Mat_n$.
We denote the algebra of invariants of $k[\Mat_n]$ under the conjugation action by $k[\Mat_n]^{\GL_n}$.
It is well-known that this is the polynomial algebra in the traces of the exterior powers of the matrix.

Now let $r,s$ be integers $\ge0$ with $r+s\le n$. We let $\GL_r\times\GL_s$ act on $k[\Mat_{rs}^m]$ as at the end of Section~\ref{s.several_matrices}:
we use the inverse rather than the transpose to define the action of $\GL_r$.
For a matrix $M$ denote by $M_{r\rfloor\lfloor s}$ the lower left $r\times s$ corner of $M$.
For $m$ an integer $\ge 2$ we define the map $\varphi_{r,s,n,m}:\Mat_n\to\Mat_{rs}^m$ by
\begin{align*}
\varphi_{r,s,n,m}(X)=\Big(X_{r\rfloor\lfloor s}\,,(X^2)_{r\rfloor\lfloor s}\,,\ldots,(X^m)_{r\rfloor\lfloor s}\Big)\,.
\end{align*}
The restriction of this map to the nilpotent cone $\mc N_n$ will be denoted by the same symbol.
In \cite{T3} the following result was proved.
\begin{thmgl}[{\cite[Thm.~1]{T3}}]\label{thm.surjective_pullback}
Let $\chi=[\lambda,\mu]$ be a dominant weight in the root lattice, $l(\mu)\le r$, $l(\lambda)\le s$, $r+s\le n$.
Then the pull-back map $$k[\Mat_{rs}^{n-1}]^{U_r\times U_s}_{(-\mu^{\rm rev},\lambda)}\to k[\mc N_n]^{U_n}_\chi$$
along $\varphi_{r,s,n,n-1}:\mc N_n\to\Mat_{rs}^{n-1}$ is surjective.
\end{thmgl}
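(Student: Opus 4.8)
The statement has two parts — that the pull-back map is well-defined (lands in $k[\mc N_n]^{U_n}_\chi$) and that it is onto — and I would treat them in turn. For well-definedness the key is an equivariance property. Let $P\le\GL_n$ be the parabolic stabilising the partial flag $\la e_1,\dots,e_s\ra\subseteq\la e_1,\dots,e_{n-r}\ra$ in $k^n$; since $r+s\le n$ these are genuine nested proper subspaces, so $P$ has Levi $\GL_s\times\GL_{n-r-s}\times\GL_r$ (block sizes $s,n-r-s,r$ down the diagonal). One observes that $(X^k)_{r\rfloor\lfloor s}$ is, up to the evident identifications, the matrix of the composite $\la e_1,\dots,e_s\ra\hookrightarrow k^n\xrightarrow{X^k}k^n\twoheadrightarrow k^n/\la e_1,\dots,e_{n-r}\ra$; hence for $g\in P$ one has $\big((gXg^{-1})^k\big)_{r\rfloor\lfloor s}=\ov g_r\,(X^k)_{r\rfloor\lfloor s}\,\ov g_s^{\,-1}$, where $\ov g_r\in\GL_r$ and $\ov g_s\in\GL_s$ are the two ``corner'' Levi components of $g$. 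So $\varphi_{r,s,n,m}$ is $P$-equivariant, $P$ acting on $\Mat_{rs}^m$ through the homomorphism $g\mapsto(\ov g_r,\ov g_s)$ combined with the action fixed at the end of Section~\ref{s.several_matrices} (inverse for $\GL_r$), and therefore $\varphi^*$ intertwines the conjugation action of $P$ on $k[\mc N_n]$ with the pulled-back $\GL_r\times\GL_s$-action. Since $U_n\subseteq P$ surjects onto $U_r\times U_s$ (send $(a,b)\in U_r\times U_s$ to $\mathrm{diag}(b,I_{n-r-s},a)$), a $U_r\times U_s$-invariant pulls back to a $U_n$-invariant; and since $\mathrm{diag}(t_1,\dots,t_n)\in T_n$ maps to $\big(\mathrm{diag}(t_{n-r+1},\dots,t_n),\mathrm{diag}(t_1,\dots,t_s)\big)$, a short bookkeeping with $\mu^{\rm rev}$ shows the character $(-\mu^{\rm rev},\lambda)$ pulls back to $[\lambda,\mu]=\chi$. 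Thus $\varphi^*$ carries $k[\Mat_{rs}^{n-1}]^{U_r\times U_s}_{(-\mu^{\rm rev},\lambda)}$ into $k[\mc N_n]^{U_n}_\chi$.

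For surjectivity my plan is to trivialise everything along the Springer resolution $\rho\colon\widetilde{\mc N}=\GL_n\times^{B_n}\n\to\mc N_n$, $[g,Y]\mapsto gYg^{-1}$, where $B_n=T_nU_n$ and $\n=\Lie(U_n)$ is the set of strictly upper triangular matrices. By Jordan normal form $\rho$ is surjective, so $\rho^*$ is an injective $\GL_n$-equivariant algebra map $k[\mc N_n]\hookrightarrow k[\widetilde{\mc N}]=\big(k[\GL_n]\ot k[\n]\big)^{B_n}$; note that only surjectivity of $\rho$ is used here, so no normality or rational-singularities input is required and there is no restriction on the characteristic. Restricting a section to the preimage of the dense big Bruhat cell $B_nw_0B_n$ ($w_0$ the order-reversing permutation matrix) and using left $U_n$-invariance, the map $F\mapsto\phi:=F(w_0,-)$ is injective on $U_n$-semiinvariants and gives an embedding $k[\mc N_n]^{U_n}_\chi\hookrightarrow k[\n]_\theta$ into a suitable (hence finite-dimensional) weight space of $k[\n]$ for the conjugation $T_n$-action; in particular $k[\mc N_n]^{U_n}_\chi$ is finite-dimensional. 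Under this embedding $\varphi^*(g)$ becomes the polynomial $Y\mapsto g\big((\text{the index-reversed upper-right }r\times s\text{ corners of }Y,Y^2,\dots,Y^{n-1})\big)$; since $Y\in\n$ satisfies $Y^n=0$ no further powers arise, which is exactly why $m=n-1$ is enough. Together with the well-definedness above, the theorem is thereby reduced to the combinatorial statement that, inside $k[\n]_\theta$, \emph{every} element of (the image of) $k[\mc N_n]^{U_n}_\chi$ is of the form $Y\mapsto g\big((\text{corners of }Y^k)_{1\le k\le n-1}\big)$ for some $g\in k[\Mat_{rs}^{n-1}]^{U_r\times U_s}_{(-\mu^{\rm rev},\lambda)}$ — for then $f$ and $\varphi^*(g)$ have the same image under the embedding and hence coincide.

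This last statement is the main obstacle: the elements of $k[\mc N_n]^{U_n}_\chi$ are exactly those $\phi\in k[\n]_\theta$ whose associated section extends regularly over all of $\widetilde{\mc N}$, and one must show that the conditions imposed by the \emph{non}-dense Bruhat cells hold precisely for the corner polynomials. This is what the ``transmutation'' method of \cite{T3} accomplishes — in a characteristic-free way and by matching the question against the explicit bideterminant description — and it is the single place where all the combinatorial work enters. (An alternative to the Springer step is to restrict functions from $\mc N_n$ to the dense regular nilpotent orbit $\GL_n\cdot X_0$ and work on $\GL_n/Z_{\GL_n}(X_0)$; the combinatorial core is unchanged.) Granting the statement, pulling the basis of Corollary~2 to Theorem~\ref{thm.good_filtration} back along $\varphi_{r,s,n,n-1}$ at once yields the finite homogeneous spanning set of $k[\mc N_n]^{U_n}_\chi$ recorded in Theorem~\ref{thm.pullback_span_2}.
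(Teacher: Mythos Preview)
The paper does not prove this statement at all: it is quoted as \cite[Thm.~1]{T3} and used as a black box (the sentence introducing it is ``In \cite{T3} the following result was proved''). So there is no proof in the paper to compare your attempt against.

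On its own merits: your well-definedness half is correct and carefully done. The identification of $(X^k)_{r\rfloor\lfloor s}$ with the composite $\la e_1,\dots,e_s\ra\hookrightarrow k^n\xrightarrow{X^k}k^n\twoheadrightarrow k^n/\la e_1,\dots,e_{n-r}\ra$, the resulting $P$-equivariance of $\varphi_{r,s,n,m}$, and the bookkeeping that turns the $(T_r\times T_s)$-weight $(-\mu^{\rm rev},\lambda)$ into the $T_n$-weight $[\lambda,\mu]$ are all right.

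Your surjectivity half, however, is not a proof. You set up a Springer-resolution embedding $k[\mc N_n]^{U_n}_\chi\hookrightarrow k[\n]_\theta$, correctly isolate the remaining task as identifying the image with the ``corner polynomials'', and then write: ``This is what the `transmutation' method of \cite{T3} accomplishes \ldots\ Granting the statement, \ldots''. But the theorem you are asked to prove \emph{is} \cite[Thm.~1]{T3}, and ``transmutation'' is precisely the name given in the introduction to the argument that establishes it. So you have reduced the theorem to itself. What you have produced is a reasonable orientation --- where the difficulty sits and why $m=n-1$ suffices --- together with a pointer to where the real argument lives; it is not an independent proof. (The final sentence, deriving Theorem~\ref{thm.pullback_span_2} from the present theorem plus Corollary~2, is of course fine, but that is a separate deduction the paper already makes.)

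A small side remark: you say the transmutation method works ``by matching the question against the explicit bideterminant description''. The surjectivity theorem in \cite{T3} is independent of any particular basis of $k[\Mat_{rs}^{n-1}]^{U_r\times U_s}_{(-\mu^{\rm rev},\lambda)}$; the bideterminant bases enter only afterwards, when one wants explicit spanning sets downstairs.
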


Combining this with Corollary~2 to Theorem~\ref{thm.good_filtration} we obtain

\begin{thmgl}\label{thm.pullback_span_1}
Let $\chi=[\lambda,\mu]$ be a dominant weight in the root lattice, $l(\mu)\le r$, $l(\lambda)\le s$, $|\lambda|=|\mu|=t$, $r+s\le n$.
Then the pull-backs of the elements $(\tilde S_\mu\,|^m_\alpha\,S_\lambda)$, $\nu,P,Q,\alpha$ as in Cor.~2 to Thm.~\ref{thm.good_filtration}, along $\varphi_{r,s,n,n-1}:\mc N_n\to\Mat_{rs}^{n-1}$
span the vector space $k[\mc N_n]^{U_n}_\chi$.
\end{thmgl}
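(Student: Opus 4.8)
The plan is to combine the two results that immediately precede this theorem: Theorem~\ref{thm.surjective_pullback} and Corollary~2 to Theorem~\ref{thm.good_filtration}. The statement of Theorem~\ref{thm.pullback_span_1} is really just the composition of these two facts, once the indexing sets are matched up correctly, so the ``proof'' should be short and essentially bookkeeping.

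First I would fix the data $\chi=[\lambda,\mu]$ with $l(\mu)\le r$, $l(\lambda)\le s$, $|\lambda|=|\mu|=t$, $r+s\le n$, and set $m=n-1$. By Theorem~\ref{thm.surjective_pullback} the pull-back map
$$
k[\Mat_{rs}^{n-1}]^{U_r\times U_s}_{(-\mu^{\rm rev},\lambda)}\longrightarrow k[\mc N_n]^{U_n}_\chi
$$
along $\varphi_{r,s,n,n-1}$ is surjective. Hence any spanning set of the source has image spanning the target. So it suffices to recall a spanning set — in fact a basis — of $k[\Mat_{rs}^{n-1}]^{U_r\times U_s}_{(-\mu^{\rm rev},\lambda)}$, and here is where Corollary~2 to Theorem~\ref{thm.good_filtration} enters: for each $\nu\in\Sigma_{m,t}$ the elements $(\tilde S_\mu\,|^m_\alpha\,S_\lambda)$, with $P,Q$ ordered tableaux of shapes $\lambda$ and $\mu$ of weight $\nu$ and $\alpha$ ranging over a set of representatives for the $m$-tuples of special semi-standard tableaux with shapes determined by $Q$ and weights determined by $P$, form a basis of the degree-$\nu$ piece of $k[\Mat_{rs}^{m}]^{U_r\times U_s}_{(-\mu^{\rm rev},\lambda)}$. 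Taking the union over all $\nu\in\Sigma_{m,t}$ gives a basis of the whole space (the space is $\mb N_0^m$-graded and only degrees with coordinate sum $t=|\lambda|=|\mu|$ can contribute, since the weight is $(-\mu^{\rm rev},\lambda)$), which is exactly the finite set appearing in the statement.

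Applying the surjection of Theorem~\ref{thm.surjective_pullback} to this basis then yields that the pull-backs $\varphi_{r,s,n,n-1}^*(\tilde S_\mu\,|^m_\alpha\,S_\lambda)$, over all $(\nu,P,Q,\alpha)$ as in Corollary~2, span $k[\mc N_n]^{U_n}_\chi$. That is the assertion. I would also briefly note that finiteness and homogeneity are automatic: the index set is finite because there are finitely many $\nu\in\Sigma_{m,t}$ and finitely many tableaux of each bounded shape, and each $(\tilde S_\mu\,|^m_\alpha\,S_\lambda)$ is multihomogeneous of degree $\nu$, so its pull-back is homogeneous of degree $|\nu|=t$.

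I do not expect a genuine obstacle here, since all the hard work has been done in Theorem~\ref{thm.good_filtration} and in the transmutation result~\cite[Thm.~1]{T3}. The only point requiring a line of care is the translation of weights: the $\GL_r$-action at the end of Section~\ref{s.several_matrices} uses the inverse rather than the transpose, which is precisely why Corollary~2 (stated for the weight $(-\mu^{\rm rev},\lambda)$) rather than Corollary~1 is the relevant one, and it matches the hypothesis of Theorem~\ref{thm.surjective_pullback}. So the ``main obstacle'' is merely to make sure the conventions in the two cited statements line up, after which the proof is a one-line composition of a surjection with a known basis.
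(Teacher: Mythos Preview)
Your proposal is correct and follows exactly the paper's approach: the paper simply says ``Combining this with Corollary~2 to Theorem~\ref{thm.good_filtration} we obtain'' and states the result without further proof. Your write-up is, if anything, more detailed than what the paper provides.
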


Next we recall the following instance of the graded Nakayama Lemma from \cite{T3}.
\begin{lemgl}[{\cite[Lem.~1]{T3}}]\label{lem.reduction_to_nilpotent_cone}
Let $f_1,\ldots,f_l\in k[\Mat_n]^{U_n}_\chi$ be homogeneous. If the restrictions $f_1|_{\mc N_n},\ldots,f_l|_{\mc N_n}$ span $k[\mc N_n]^{U_n}_\chi$,
then $f_1,\ldots,f_l$ span $k[\Mat_n]^{U_n}_\chi$ as a $k[\Mat_n]^{\GL_n}$-module.
The same holds with ``span" replaced by ``form a basis of".
\end{lemgl}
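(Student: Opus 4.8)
My plan is to recognise the statement as an instance of the graded Nakayama Lemma, with ground ring $R:=k[\Mat_n]^{\GL_n}$ and module $M:=k[\Mat_n]^{U_n}_\chi$. Here $R$ is a finitely generated $\mb N$-graded $k$-algebra with $R_0=k$ (the polynomial ring generated by the traces of the exterior powers of the matrix), and $R_+:=\bigoplus_{d>0}R_d$ is its irrelevant ideal; $M$ is a graded $R$-module, because $R$ consists of $\GL_n$-invariants of weight $0$, so multiplication by $R$ preserves both $U_n$-invariance and the weight $\chi$; moreover $M$ is bounded below and each graded piece is finite dimensional. Note that the generators of $R_+$, being the non-leading coefficients of the characteristic polynomial, vanish identically on $\mc N_n$, so $R_+$ acts as $0$ on $k[\mc N_n]$ and $k[\mc N_n]^{U_n}_\chi$ is a module over $R/R_+=k$.

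The heart of the argument is to identify $M/R_+M$ with $k[\mc N_n]^{U_n}_\chi$ compatibly with restriction. I would start from the short exact sequence of $\GL_n$-modules
\[0\longrightarrow R_+k[\Mat_n]\longrightarrow k[\Mat_n]\longrightarrow k[\mc N_n]\longrightarrow 0,\]
which uses the classical fact that the scheme-theoretic zero fibre of the adjoint quotient $\Mat_n\to\Spec R$ is reduced, i.e.\ $I(\mc N_n)=R_+k[\Mat_n]$ (Kostant in characteristic $0$; this remains valid for $\gl_n$ in arbitrary characteristic). Now apply the functor $(-)^{U_n}_\chi$. This functor is exact on the displayed sequence because all three terms carry good $\GL_n$-filtrations --- for $k[\Mat_n]$ this is classical, and for $k[\mc N_n]$, hence for the ideal $I(\mc N_n)$, it is known --- and because modules with a good filtration are acyclic for $(-)^{U_n}_\chi$ at the dominant weight $\chi$ (standard good-filtration theory, see \cite{Jan}). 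Hence restriction yields an $R$-linear surjection $\rho\colon M\to k[\mc N_n]^{U_n}_\chi$ with $\ker\rho=(R_+k[\Mat_n])^{U_n}_\chi=R_+\cdot k[\Mat_n]^{U_n}_\chi=R_+M$, so $\rho$ induces an isomorphism $M/R_+M\xrightarrow{\ \sim\ }k[\mc N_n]^{U_n}_\chi$ sending the class of $f_i$ to $f_i|_{\mc N_n}$.

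For the spanning assertion, suppose the $f_i|_{\mc N_n}$ span $k[\mc N_n]^{U_n}_\chi$. Then the classes of the $f_i$ span $M/R_+M$ over $k$, so $M=\sum_i Rf_i+R_+M$. A degree induction now gives $M=\sum_i Rf_i$: writing $M_d$ for the degree-$d$ piece, one has $M_d\subseteq\sum_i(Rf_i)_d+\sum_{e>0}R_eM_{d-e}$, and each $M_{d-e}$ with $e>0$ already lies in $\sum_i Rf_i$ by the inductive hypothesis, the base case using that $M$ is bounded below. For the ``basis'' version one needs in addition that $M$ is a free $R$-module: this holds because the coefficients of the characteristic polynomial form a homogeneous system of parameters, hence a regular sequence, on the Cohen--Macaulay ring $k[\Mat_n]$, and this Cohen--Macaulay/regular-sequence property passes to the covariant module $M$ --- one runs the same good-filtration exactness argument over the Koszul complex on these coefficients --- so $M$ is a maximal Cohen--Macaulay, hence free, module over the polynomial ring $R$. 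Granting this: if the $f_i|_{\mc N_n}$ form a basis of $k[\mc N_n]^{U_n}_\chi$, then, with $l$ the number of the $f_i$, the surjection $R^l\to M$, $e_i\mapsto f_i$, becomes an isomorphism after $-\otimes_R k$; since $M$ is flat, $\Tor_1^R(M,k)=0$, so the kernel $K$ of $R^l\to M$ satisfies $K\otimes_R k=0$ and hence $K=0$ by graded Nakayama, i.e.\ the $f_i$ form an $R$-basis of $M$.

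The main obstacle, and the only part that is not formal bookkeeping, is the isomorphism $M/R_+M\cong k[\mc N_n]^{U_n}_\chi$ together with the freeness of $M$ over $R$: both rest on characteristic-free structural input about $\gl_n$ --- the reducedness of the nilpotent-cone scheme, the good filtrations of $k[\Mat_n]$, $I(\mc N_n)$ and $k[\mc N_n]$, and the resulting Kostant-type separation of variables $k[\Mat_n]\cong R\otimes_k k[\mc N_n]$ at the level of $U_n$-covariants of each fixed weight. Everything downstream of that is the standard graded Nakayama argument.
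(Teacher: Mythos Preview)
The paper does not supply a proof of this lemma; it merely cites \cite[Lem.~1]{T3} and labels the result ``an instance of the graded Nakayama Lemma''. Your proposal is correct and provides exactly the argument that this description points to: with $R=k[\Mat_n]^{\GL_n}$ and $M=k[\Mat_n]^{U_n}_\chi$, you identify $M/R_+M\cong k[\mc N_n]^{U_n}_\chi$ using that $I(\mc N_n)=R_+k[\Mat_n]$ and that $(-)^{U_n}_\chi\cong\Hom_{\GL_n}(\Delta(\chi),-)$ is exact on modules with a good filtration, then apply graded Nakayama; the freeness needed for the ``basis'' clause comes from running the same acyclicity argument along the Koszul complex on the generators of $R_+$. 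One small point worth making explicit: the equality $(R_+k[\Mat_n])^{U_n}_\chi=R_+M$ is not automatic from the short exact sequence alone---it is precisely here that the Koszul/regular-sequence argument (or the Kostant-type tensor decomposition you invoke at the end) is doing work, so it is cleanest to run the Koszul argument from the start rather than as an afterthought. With that caveat, your approach matches the paper's intended one.
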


Combining Theorem~\ref{thm.pullback_span_1} and Lemma~\ref{lem.reduction_to_nilpotent_cone} we finally obtain

\begin{thmgl}\label{thm.pullback_span_2}
Let $\chi=[\lambda,\mu]$ be a dominant weight in the root lattice, $l(\mu)\le r$, $l(\lambda)\le s$, $|\lambda|=|\mu|=t$, $r+s\le n$.
Then the pull-backs of the elements $(\tilde S_\mu\,|^m_\alpha\,S_\lambda)$, $\nu,P,Q,\alpha$ as in Cor.~2 to Thm.~\ref{thm.good_filtration}, along $\varphi_{r,s,n,n-1}:\Mat_n\to\Mat_{rs}^{n-1}$
span the $k[\Mat_n]^{\GL_n}$-module $k[\Mat_n]^{U_n}_\chi$.
\end{thmgl}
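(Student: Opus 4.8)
The proof is a formal consequence of the two results just recorded, Theorem~\ref{thm.pullback_span_1} and Lemma~\ref{lem.reduction_to_nilpotent_cone}. The plan is to check that the pull-backs $\varphi_{r,s,n,n-1}^*(\tilde S_\mu\,|^{n-1}_\alpha\,S_\lambda)$ are homogeneous elements of $k[\Mat_n]^{U_n}_\chi$ and that their restrictions to $\mc N_n$ are precisely the spanning set of $k[\mc N_n]^{U_n}_\chi$ furnished by Theorem~\ref{thm.pullback_span_1}; the assertion then drops out of Lemma~\ref{lem.reduction_to_nilpotent_cone}.

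First I would check homogeneity. Since $\varphi_{r,s,n,n-1}$ sends $X$ to $\bigl(X_{r\rfloor\lfloor s},(X^2)_{r\rfloor\lfloor s},\ldots,(X^{n-1})_{r\rfloor\lfloor s}\bigr)$, its comorphism carries the coordinate function $x(i)_{pq}$ on $\Mat_{rs}^{n-1}$ to an entry of $X^i$, which is homogeneous of degree $i$ in the entries of $X$. The twisted bideterminant $(\tilde S_\mu\,|^{n-1}_\alpha\,S_\lambda)$ is multihomogeneous of some multidegree $\nu\in\Sigma_t$ by Corollary~2 to Theorem~\ref{thm.good_filtration}, so its pull-back along $\varphi_{r,s,n,n-1}$ is homogeneous of degree $\sum_i i\,\nu_i$ in $k[\Mat_n]$.

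Next I would argue that these pull-backs lie in the weight space $k[\Mat_n]^{U_n}_\chi$. This is exactly the $U_n$-equivariance underlying Theorem~\ref{thm.surjective_pullback}: the morphism $\varphi_{r,s,n,n-1}\colon\Mat_n\to\Mat_{rs}^{n-1}$ intertwines the conjugation action of $U_n$ on $\Mat_n$ with the $U_r\times U_s$-action on $\Mat_{rs}^{n-1}$, carrying a highest weight vector of weight $(-\mu^{\rm rev},\lambda)$ to a $U_n$-invariant of weight $\chi=[\lambda,\mu]$. Since the elements $(\tilde S_\mu\,|^{n-1}_\alpha\,S_\lambda)$ lie in $k[\Mat_{rs}^{n-1}]^{U_r\times U_s}_{(-\mu^{\rm rev},\lambda)}$ by Corollary~2 to Theorem~\ref{thm.good_filtration}, their pull-backs lie in $k[\Mat_n]^{U_n}_\chi$.

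Finally, $\varphi_{r,s,n,n-1}|_{\mc N_n}$ is by definition the restriction of $\varphi_{r,s,n,n-1}$, so restricting the pull-backs above to $\mc N_n$ returns exactly the elements appearing in Theorem~\ref{thm.pullback_span_1}, which span $k[\mc N_n]^{U_n}_\chi$. Applying Lemma~\ref{lem.reduction_to_nilpotent_cone} to the homogeneous elements $\varphi_{r,s,n,n-1}^*(\tilde S_\mu\,|^{n-1}_\alpha\,S_\lambda)\in k[\Mat_n]^{U_n}_\chi$ then shows that they span $k[\Mat_n]^{U_n}_\chi$ as a $k[\Mat_n]^{\GL_n}$-module, which is the assertion. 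I do not expect a genuine obstacle here: the only point needing care is the bookkeeping of the $U_n$-equivariance of $\varphi_{r,s,n,n-1}$ together with the weight matching $(-\mu^{\rm rev},\lambda)\leftrightarrow\chi$, and this is already set up in \cite{T3} and packaged in Theorem~\ref{thm.surjective_pullback}; all the substantive work — the explicit bases of Theorem~\ref{thm.good_filtration} and the graded Nakayama reduction — lies in the preceding results.
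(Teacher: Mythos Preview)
Your argument is correct and follows exactly the paper's approach: the paper simply says the result is obtained by combining Theorem~\ref{thm.pullback_span_1} and Lemma~\ref{lem.reduction_to_nilpotent_cone}, and you have spelled out the routine verifications (homogeneity, weight, compatibility of the two pull-backs) needed to apply the lemma.
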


\begin{remsgl}
1.\ Note that pulling the $(\tilde S_\mu\,|^m_\alpha\,S_\lambda)$ back just amounts to interpreting $x(Q(a))_{ij}$ as the $(i,j)$-th entry of the $Q(a)$-th matrix power and replacing $r-a_1+1$ by $n-a_1+1$. In particular, these pulled-back functions don't depend on the choice of $r$ and $s$.\\
2.\ One obtains a bigger, ``easier" spanning set by allowing arbitrary $P,Q$ of weight $\nu$ and arbitrary bijections $\alpha:\mu\to\lambda$ with $P\circ\alpha=Q$.
\end{remsgl}

\bigskip

{\sc\noindent School of Mathematics,\\
University of Leeds, LS2 9JT, Leeds, UK.\\
{\rm E-mail addresses : }{{\tt mmadjd@leeds.uk} \text{\rm and} {\tt R.H.Tange@leeds.ac.uk}}
}

\end{document}